\documentclass[preprint,12pt]{elsarticle}

\RequirePackage[OT1]{fontenc}
\RequirePackage[colorlinks,citecolor=blue,urlcolor=blue]{hyperref}

\usepackage{graphicx}
\usepackage{amsmath, amssymb, amsthm, mathrsfs,verbatim,bbm}
%\usepackage[left=1in,top=1in,right=1in]{geometry}

%Some Temporary commands in order to highlight changes
%\usepackage[notref,notcite]{showkeys}%shows labels when editing

\usepackage{amsmath,amstext,amssymb,amsopn,amsthm}
\usepackage{url,verbatim}
\usepackage{mathtools}
\usepackage{enumerate}

\usepackage[toc,page]{appendix}

\usepackage{color,graphicx}

\usepackage[margin=20mm]{geometry}
\usepackage{eucal,mathrsfs,dsfont}%gives nicer set-names. 

\allowdisplaybreaks

\newtheorem{theorem}{Theorem}[section]

\newtheorem{lemma}[theorem]{Lemma}
\newtheorem{proposition}[theorem]{Proposition}

\theoremstyle{definition}

\newtheorem{remark}[theorem]{Remark}

\numberwithin{equation}{section}

\newcommand{\eps}{\varepsilon}
\renewcommand{\epsilon}{\varepsilon}

\newcommand{\calL}{\mathcal{L}}

\newcommand{\calU}{\mathcal{U}}

\newcommand{\E}{\operatorname{\mathds{E}}} % Expectation
\renewcommand{\P}{\operatorname{\mathds{P}}} % Probability
\newcommand{\R}{\mathds{R}}

\newcommand{\N}{{\mathds{N}}}

\newcommand{\prt}{\partial}
\newcommand{\oper}{\calU}

\newcommand{\bS}{\mathbf{S}}
\newcommand{\bu}{\mathbf{u}}
\newcommand{\be}{\mathbf{e}}
\newcommand{\bv}{\mathbf{v}}
\newcommand{\bw}{\mathbf{w}}
\newcommand{\bn}{\mathbf{n}}
\newcommand{\bb}{\mathbf{b}}
\newcommand{\bLambda}{\mathbf{\Lambda}}

\newcommand{\bU}{\mathbf{U}}
\newcommand{\bB}{\mathbf{B}}

\newcommand{\bx}{\mathbf{x}}
\newcommand{\bV}{\mathbf{V}}
\newcommand{\bX}{\mathbf{X}}

\newcommand{\wh}{\widehat}
\renewcommand{\hat}{\widehat}
\newcommand{\wt}{\widetilde}

\DeclareMathOperator{\Leb}{Leb}

\def\bv{{\bf v}}

\def\bx{{\bf x}}

\renewcommand{\v}{\sqrt{2g|y|}}
\newcommand{\vs}{2g|y|}
\newcommand{\vd}{\sqrt{2g|x_d|}}
\newcommand{\vsd}{2g|x_d|}
\newcommand{\cf}{\mathbbm{1}}

\newcommand{\cA}{\mathcal{A}}
\newcommand{\cF}{\mathcal{F}}

\journal{$ $}

\begin{document}

\begin{frontmatter}
\title{A random flight process associated to a Lorentz gas with variable density in a gravitational field}

\author[*]{Krzysztof Burdzy\corref{nsf1}}
\ead{burdzy@math.washington.edu}
\author[**]{Douglas Rizzolo\corref{nsf2}}
\ead{drizzolo@udel.edu}
\address[*]{Department of Mathematics, Box 354350,University of Washington, Seattle, WA 98195}
\address[**]{Department of Mathematical Sciences, 501 Ewing Hall, University of Delaware, Newark, DE 19716}
\cortext[nsf1]{Supported in part by NSF Grant DMS-1206276}
\cortext[nsf2]{Supported in part by NSF Grant DMS-1204840}

\begin{abstract}
We investigate the random flight process that arises as the Boltzmann-Grad limit of a random scatterer Lorentz gas with variable scatterer density in a gravitational field.  For power function densities we show how the parameters of the model determine recurrence or transience of the vertical component of the trajectory.  Finally, our methods show that, with appropriate scaling of space, time and the density of obstacles, the trajectory of the particle converges to a diffusion with explicitly given parameters. 
\end{abstract}

\begin{keyword}
Lorentz Model \sep
External field \sep
Invariance principles

\MSC 60F17

\end{keyword}
\end{frontmatter}

\section{Introduction}\label{intro}

We consider the random flight process that arises as the Boltzmann-Grad limit of a random scatterer model (``Lorentz gas'') in a constant gravitational field.  Lorentz gas model, which was introduced in 1905 as a model for the motion of an electron in a metallic body \cite{Lor05}, has been studied extensively in the mathematics and physics literature.  See \cite{Det14} for a recent survey.  Fundamentally, the model consists of a particle moving in an array of fixed convex scatterers, which are placed either periodically or randomly, and the particle either reflects specularly off of the scatterers (hard core model) or is pushed away via a potential (soft core model).  We are motivated by the three dimensional random scatterer hard core model where, in addition to interacting with scatterers, the particle is also pulled down by a constant gravitational field.  We generalize the process to arbitrary dimension and investigate whether it is recurrent or transient.  We show that dimension three with constant density of scatterers is critical for determining recurrence versus transience with respect to both dimension and the rate at which the density of scatterers increases.    

Various aspects of the influence of a gravitational field on a Lorentz gas have previously been investigated, see e.g. \cite{CD09, PW79, RaviT, Vy06}.  Of this prior work, only \cite{CD09} has worked directly with the Lorentz gas model.  In \cite{CD09} the authors prove the surprising result that the two dimensional periodic scatterer Lorentz gas particle in a gravitational field is recurrent and they also establish a diffusive limit for its trajectory.  However, as the authors of \cite{CD09} mention there, extending their methods to the three dimensional case currently seems intractable.   In the other papers the authors work, as we will, with the Boltzmann-Grad limit of the Lorentz gas rather than the Lorentz gas itself.  The Boltzmann-Grad limit is a low density limit in which the number of scatterers in a fixed box goes to infinity while, at the same time, the size of each scatterer goes to zero in such a way that 
%the total volume of the scatterers in the box goes to zero.  
the distribution of the distance between scattering events for the tracer particle has a non-degenerate limit. 
When the centers of scatterers are placed according to a Poisson process and the rates are chosen appropriately, the asymptotic behavior of the moving particle is described by a Markovian random flight process \cite{G78, S78, S88}.  The Markovian nature of the Boltzmann-Grad limit is due to the following two observations: (i) re-collisions with scatterers become unlikely as the size of each scatterer goes to zero, and (ii) the Poisson nature of the scatterer locations  means that knowing the location of one scatterer does not give information about the locations of the other scatterers.  Since analyzing the random Lorentz gas directly is beyond the capability of current techniques, this random flight model is commonly studied in both the mathematics literature \cite{BFS00, BCLM02, RaviT, Vy06} and the physics literature \cite{ADLP10, dW04, Mu04, vB05} to gain insight into the behavior of random Lorentz gas models.  Random flight processes also arise in settings other than Lorentz gas models.  For example, the random flight process we study here also appears as a model for a particle percolating through a porous medium, see \cite{WE82} and the references therein.  

\subsection{The model}
Let us now introduce our model carefully.  

We will use the notation $\bx=(x_1,x_2,\dots, x_d)\in \R^d$.
We will denote $d$-dimensional sphere $\mathbf{S}^{d-1}:=\{\bx\in \R^d: \|\bx\|=1\}$ and we will typically reserve the following notation for its elements, $\bu=(u_1,u_2,\dots, u_d) \in \bS^{d-1}$. 
We will denote components of other vectors in a similar way.
The notation $d\mathbf{x}$ will refer to $d$-dimensional Lebesgue measure.

We are primarily interested in the process in dimension three and we start by explaining the Boltzmann-Grad limit.  Fix $g>0$ and $h:\R \to \R$.  The constant $g$ will serve as the strength of the gravitational field, which will be directed towards $-\infty$ in the last coordinate and will not act on the other coordinates, and the density of scatterers will be determined by $h$.  For simplicity we will assume that the density of scatterers depends only on the distance from the plane $\R^2\times\{0\}$.  In the Boltzmann-Grad limit, we let the size of the scatterers tend to $0$ as the number of scatterers tends to $\infty$.  In particular, assume spherical scatterers with radius $1/R$ are placed so their centers are the points of a Poisson process with intensity $R^{2}h(x_3)d\mathbf{x}$.  Since, typically, the trajectory of a particle in a gravitational field does not intersect itself, the arguments of \cite{S78, S88} can easily be adapted to include the gravitational field and produce the following result: if the initial position and velocity of the particle is absolutely continuous with respect to Lebesgue measure on the constant energy surface then the distribution of the position and velocity process of the particle converges as $R\to\infty$, in the sense of convergence of finite dimensional distributions, to a Markovian random flight process $(\mathbf{X}(t), \mathbf{V}(t))_{t\geq 0}$ with generator
\begin{equation}\label{eq generator 1} \hat{D}f(\mathbf{x},\mathbf{v}) =  \mathbf{v} \cdot \nabla_{\mathbf{x}} f(\mathbf{x}, \mathbf{v}) -g\frac{\partial}{\partial v_3} f(\mathbf{x}, \mathbf{v}) + h(x_3)\|\mathbf{v}\| \int_{\mathbf{S}^{2}} (f(\mathbf{x}, \|\mathbf{v}\|\mathbf{u}) - f(\mathbf{x}, \mathbf{v})) \sigma(d\mathbf{u}),  \end{equation}
where $\sigma$ is the normalized surface measure on the unit sphere $\mathbf{S}^2$, see \cite{S88}.  More generally, in any dimension $d$ we can consider the process $(\mathbf{X}(t), \mathbf{V}(t))_{t\geq 0}$ with generator
\begin{equation}\label{eq generator d} \hat{D}f(\mathbf{x},\mathbf{v}) =  \mathbf{v} \cdot \nabla_{\mathbf{x}} f(\mathbf{x}, \mathbf{v}) -g\frac{\partial}{\partial v_d} f(\mathbf{x}, \mathbf{v}) + h(x_d)\|\mathbf{v}\| \int_{\mathbf{S}^{d-1}} (f(\mathbf{x}, \|\mathbf{v}\|\mathbf{u}) - f(\mathbf{x}, \mathbf{v})) \sigma(d\mathbf{u}),  \end{equation}
where $\sigma$ is the normalized surface measure on the unit sphere $\mathbf{S}^{d-1}$.  In dimensions other than $3$ the Boltzmann-Grad limit of the Lorentz gas has a similar generator, but instead of the integral being against the normalized surface measure it is against a kernel that depends on $\bv$, see Appendix \ref{refdir}.  We expect the two processes to have similar qualitative behavior.

 The process $(\mathbf{X}(t), \mathbf{V}(t))_{t\geq 0}$ can be constructed in the following way, which explains the name ``random flight process''.  Let $(\bLambda(\bx,\bv,t))_{t\geq 0}$ with $\bx,\bv\in \R^d$ and $\|\bv\|^2/2 -g|x_d|=E$ denote the solution to the initial value problem
 \begin{equation}\label{eq e evo} 
\left\{\begin{array}{lcc} \bLambda''  &\equiv & -g\be_d ,
\\ \bLambda(0)& = & \bx ,
\\ \bLambda'(0)& =&\ \bv,\end{array}\right.
\end{equation}
where $\be_1,\dots,\be_d$ are the standard basis vectors of $\R^d$.  We construct our process $((\mathbf{X}(t),\mathbf{V}(t)), t\geq 0 )$ recursively as follows.  Set $(\mathbf{X}(0),\mathbf{V}(0)) = (\bx,\bv)$ and let $T_0=0$. For $k\geq 1$, assuming we have defined $((\mathbf{X}(t),\mathbf{V}(t)))_{0\leq t\leq T_{k-1}}$, we let $\bU_{k-1}$ be independent of this part of the path and uniformly distributed on $\mathbf{S}^{d-1}$ and let $T_k$ satisfy
\begin{multline}\label{e_reflection} \P\left(T_k-T_{k-1}>t \mid \bU_{k-1}, ((\mathbf{X}_t,\mathbf{V}_t))_{0\leq t\leq T_{k-1}}\right)\\ =  \exp \left(- \int^{t}_0
h\left(\bLambda(\mathbf{X}(T_{k-1}),\|\mathbf{V}(T_{k-1})\|\bU_{k-1},s)\right) \left\|\bLambda'(\mathbf{X}(T_{k-1}),\|\mathbf{V}(T_{k-1})\|\bU_{k-1},s)\right\|ds\right).\end{multline}
For $t\in [T_{k-1},T_{k}]$ we then define
\begin{equation}\label{e_full_trajectory_definition}
\begin{split}
& \bX(t) := \bLambda( \bX(T_{k-1}), \|  \bV(T_{k-1})\|  \bU_{k-1}, t- T_{k-1}), \\
& \bV(t) :=\bLambda'( \bX(T_{k-1}),\|  \bV(T_{k-1})\|  \bU_{k-1}, t- T_{k-1}).
\end{split}
\end{equation}
We note that, under very mild assumptions, $T_k\to \infty$ a.s., and thus this defines the path of the particle for all times.  Intuitively, $T_k$ defines the $k$th reflection of our particle by a scatterer. 

At this point we make a simple but important observation.  By conservation of energy,
\[ \|  \bV(t)\|   = \sqrt{ 2(E +g |X_d(t)|)},\]
so that, if we define
\begin{equation}\label{eq E speed} v(\bx) =   \sqrt{2(E + g |x_d|)},\end{equation}
then
\begin{equation}\label{eq path def} \bX(t) = \bLambda\left( \bX(T_{k-1}), v(\bX(T_{k-1}))\bU_{k-1}, t- T_{k-1}\right).\end{equation}
Since $\bU_{k-1}$ is independent of  $((\mathbf{X}(t),\mathbf{V}(t)))_{0\leq t\leq T_{k-1}}$, this implies that if we define $\bX_k = \bX(T_k)$, then $(\bX_k)_{k\geq 1}$ is a Markov chain.  That the index in this chain starts at $1$ is an artifact of our deterministic choice of $\bV(0)$.  If instead of choosing $\bV(0)=\bv_0$ in the construction above we take $\bV(0) = v(\bX(0))\bU$,
with $\bU$ uniformly distributed on $\mathbf{S}^{d-1}$, then $(\bX_k)_{k\geq 0}$ is a Markov chain and its transition operator is
\begin{equation} \label{eq transition} \hat{P}f(\bx)  = \E\left[f\left(\bLambda\left(\bx,v(\bx)\bU, N(\bx,\bU)\right)\right) \right],\end{equation}
where $N(\bx,\bu)$ is a random variable with distribution
\begin{equation} \label{eq inter reflection} \P\left(N(\bx,\bu) >t\right) =  \exp \left(- \int^{t}_0 h[\bLambda(\bx,v(\bx)\bu,s)] v[\bLambda(\bx,v(\bx)\bu,s)]ds\right),\end{equation}
and conditional on $\bU=\bu$, $N(\bx,\bU)$ is distributed like $N(\bx,\bu)$.

To simplify matters, we will assume that the particle has zero total energy, i.e., $E=0$ (this is purely a normalization assumption and has no substantive impact on our results).  In this case, between reflections the particle travels along the gravitational parabola
\begin{equation} \label{equation basic parabola 1}
\left\{ \bLambda(\bx,\bu,t)  := \sum_{i=1}^{d-1} \left(x_i+u_i\sqrt{2g|x_d|}t \right)\be_i+\left( x_d+ u_d\sqrt{2g|x_d|}t - \frac{g}{2}t^2\right)\be_d , t\geq 0\right\}.
\end{equation}
  We investigate questions of transience and recurrence for the $d$'th coordinate of the random flight process \eqref{eq generator d} when $h$ is of the form $h(\mathbf{x})=h(x_d)=c|x_d|^\lambda$ for some $\lambda\geq 0$.  Since our force acts only in the $d$'th coordinate, under this assumption on $h$ the evolution of $((X_d(t),V_d(t)), t\geq 0)$ becomes a Markov process with generator
\begin{equation}\label{eq generator dth} 
Df(y,v) =  v \frac{\partial}{\partial y} f(y, v) 
-g\frac{\partial}{\partial v} f(y, v) + h(y)\sqrt{2g|y|} \int_{\mathbf{S}^{d-1}} \left(f\left(y, \sqrt{2g|y|}\mathbf{u}\right) - f(y, v)\right) \sigma(d\mathbf{u}),  
\end{equation}
and if we observe the process only at reflection times, $(X_{k,d})_{k\geq 0}$ is a Markov chain with transition operator
\begin{equation} \label{eq transition dth} 
Pf(y)  = \E\left[f\left(\Lambda_d\left(y\be_d,\sqrt{2g|y|} \bU, N(y\be_d,\bU)\right)\right) \right].
\end{equation}
For ease of notation, we set $N(y, \bu) = N(y\be_d, \bu)$.  Since our force acts only in the $d$'th coordinate, determining transience versus recurrence for the $d$'th coordinate is equivalent to determining transience versus recurrence of the particle's kinetic energy.  Our approach to transience versus recurrence naturally leads to some invariance principles, which we explore as well.  Interestingly, the scaling is non-Brownian for most values of $\lambda$.  The methods we use can also be used to establish invariance principles for more general $h$, and we sketch how this is done.  In subsequent work of the second author and other coauthors this approach was extended to study these types of random flight processes in a general force and scattering density \cite{HRW15}. 

  Our model is closely related, at least in the heuristic sense, to the Galton board dynamics considered in \cite{CD09}.  In \cite{CD09} it is shown that the trajectory of a ball in a Galton board-type billiards with gravitation is recurrent and a diffusive limit for the particle trajectory is determined.  One of the motivations of the present work is to investigate whether these results are robust under perturbations of the model.  We determine criteria for the recurrence or transience of the particle trajectory for particular forms of the density of scatterers.  Our methods allow us to derive several types of invariance principles in multiple scaling regimes and determine the influence of the density of scatterers on the limiting diffusion.  A similar model with constant scatterer density was previously considered in \cite{RaviT}, where diffusion limits were obtained but questions of transience and recurrence were not addressed. 

Suppose that
$(\mathbf{X}(t), \mathbf{V}(t))_{t\geq 0}$
  has the generator \eqref{eq generator d} and
let  $(\mathbf{X}(t))_{t\geq 0}= \{(X_1(t),\dots,X_d(t))\}_{t\geq 0}$.
The processes $(\mathbf{X}(t))_{t\geq 0}$ and $(X_d(t),t\geq 0)$ are not Markov. The concepts of recurrence and transience are typically  applied to Markov processes so we need the following definition. Let $\mathbf{0}= (0,\dots,0)$ and assume that $(\mathbf{X}(0), \mathbf{V}(0))=(\mathbf{0},\mathbf{0})$. We say that $(X_d(t),t\geq 0)$ is neighborhood recurrent if for every $y < 0$, the process $(\mathbf{X}(t), \mathbf{V}(t))_{t\geq 0}$ hits
$\R^{d-1} \times [y,0] \times \R^d$ infinitely often, a.s. 
We say that $(X_d(t),t\geq 0)$ is  recurrent if for every $y \leq 0$, the process $(\mathbf{X}(t), \mathbf{V}(t))_{t\geq 0}$ hits
$\R^{d-1} \times \{y\} \times \R^d$ infinitely often, a.s. 

Our main result on transience versus recurrence in the case $h(\mathbf{x})=h(x_d)=c|x_d|^\lambda$ is the following theorem.

\begin{theorem}\label{theorem power function intro t/r}
 Let
$(\mathbf{X}(t), \mathbf{V}(t))_{t\geq 0}$
  be the Markov process with generator \eqref{eq generator d} started from $(\mathbf{0},\mathbf{0})$ with gravitation $g$ and scatterer density $h(\mathbf{x})=h(x_d)=c|x_d|^\lambda$, with $c>0$ and $\lambda\geq 0$.  
Let  $(\mathbf{X}(t))_{t\geq 0}= \{(X_1(t),\dots,X_d(t))\}_{t\geq 0}$.
 \begin{enumerate}
 \item If $d =1$ then $(X_d(t),t\geq 0)$ is recurrent.
 \item If $d\in \{2,3\}$ then $(X_d(t) ,t\geq 0)$ is neighborhood recurrent but not recurrent.
 \item If $d\geq 4$ then $(X_d(t),t\geq 0)$ is transient if $\lambda<(d-3)/2$ and neighborhood recurrent (but not recurrent) if $\lambda>(d-3)/2$. 
\end{enumerate}
\end{theorem}

We will show that recurrence fails in the case $d\geq 2$  because $X_d(t)$ does not visit $0$ infinitely often, and 0 is the only number in $(-\infty, 0]$ with this property. 

We note that already for the case $d=3$ we have to do  careful calculations to show that the process is recurrent when $\lambda=0$, which is the ``critical'' case in dimension $3$.  In this we are aided by the fact that $h$ is constant in this case.  Even more delicate calculations are likely to be needed to determine whether the process is transient or recurrent when $d\geq 4$ and $\lambda = (d-3)/2$ so we leave this case open.

Our approach to proving Theorem \ref{theorem power function intro t/r} leads naturally to two invariance principles, the first for the process observed at reflection times and the second for the process on its natural time scale.

\begin{theorem}\label{theorem intro sk inv}
Let $(\mathbf{X}_k)_{k\geq 0}= \{(X_{1,k} ,\dots,X_{d,k})\}_{k\geq 0}$ be the Markov chain with transition operator \eqref{eq transition} with gravitation $g$ and scatterer density $h(\mathbf{x})=h(x_d)=c|x_d|^\lambda$, with $c>0$ and $\lambda\geq 0$.  Let
\[  d' =  \frac{d+1+2\lambda}{2+2\lambda}.
\]
Under these conditions, regardless of the distribution of $X_0$, 
\[ \left( \frac{1}{n^{\frac{1}{2+2\lambda}}} X_{d,[nt]},t\geq 0\right) \rightarrow_d \left(-\rho_{d'} \left(\frac{2}{dc^2}\left(1+\lambda \right)^2 t\right)^{1/(1+\lambda)}, t\geq 0\right),
\]
where the convergence is in distribution on the Skorokhod space $D(\R_+,\R)$ and $\left(\rho_{d'} (t), t\geq 0\right)$ is a $d'$-dimensional Bessel process started at $0$.
\end{theorem}

The standard classification of recurrence versus transience for Bessel processes shows that the limiting process is recurrent at $0$ if $\lambda > (d-3)/2$, transient if $\lambda < (d-3)/2$, and neighborhood recurrent at $0$ if $\lambda = (d-3)/2$.  This agrees with the classification for the process in Theorem \ref{theorem power function intro t/r}, and also predicts that the case $\lambda = (d-3)/2$ will be the most subtle.

Note that the scaling is non-Brownian except when $\lambda=0$.  Since Theorem \ref{theorem intro sk inv} deals with the process observed only at reflection times, the particle's velocity does not contribute to this exponent.  That is, the non-Brownian scaling is caused purely by the increasing scattering density.  The next result, which provides an invariance principle for $(X_d(t))_{t\geq 0}$, shows that the particle's velocity contributes a further non-Brownian term to the scaling.  Our approach uses a time change argument, but the result is somewhat weaker since the time change is degenerate when the limiting process hits $0$.  Consequently, we must stop the process before it hits $0$.  Clearly, this is only a meaningful restriction if $0$ is recurrent for the limiting process.

\begin{theorem} \label{theorem intro natural}
 Let
$(\mathbf{X}(t), \mathbf{V}(t))_{t\geq 0}$
  be the Markov process with generator \eqref{eq generator d} started from $(\mathbf{0},\mathbf{0})$ with gravitation $g$ and scatterer density $h(\mathbf{x})=h(x_d)=c|x_d|^\lambda$, with $c>0$ and $\lambda\geq 0$.  
Let  $(\mathbf{X}(t))_{t\geq 0}= \{(X_1(t),\dots,X_d(t))\}_{t\geq 0}$.
  Fix $z<v<0$.  Let $T^n_z$ be the time of the first reflection at which $X_d < n^{1/(2+2\lambda)}z$ and let $T^n_v$ be the time of the first reflection after $T^n_z$ such that $X_d > n^{1/(2+2\lambda)}v$.  Let $\mathscr{Z}$ be a diffusion on $(-\infty, 0)$ started from $z$ whose generator acts on $f\in C^2$ with compact support in $(-\infty,0)$ by
\[ \mathcal{G}^{\lambda,c} f(y) =   \frac{2\sqrt{2g}}{dc} |y|^{1/2-\lambda} \left[\frac{1}{2}f''(y) - \left(\frac{d-1-2\lambda}{4|y|} \right)f'(y) \right].\]
As $n\to \infty$ we have 
\[ \left(n^{-\frac{1}{2+2\lambda}}X_d\left(\left(n^{ \frac{3+2\lambda}{4+4\lambda}}t+T^n_z \right)\wedge  T^n_v\right),\ t \geq 0\right) \rightarrow (\mathscr{Z}(t \wedge \tau_{v+}), \ t\geq 0),\]
in distribution in the Skorokhod space $D(\R_+,\R)$, where $\tau_{v+} =\inf\{t : \mathscr{Z}(t) > v\}$.
\end{theorem}

Our methods can also be used to establish invariance principles with more general functions $h$, though with a different scaling.  We prove the following result.

\begin{theorem}\label{THM GENERAL DENSITY}
Let $h:(-\infty,0] \to \R_+$ be $C^2$ on $(-\infty,0)$ and bounded away from zero on $(-\infty, a]$ for every $a<0$. Fix $g>0$. 
 Let
$(\mathbf{X}(t), \mathbf{V}(t))_{t\geq 0}$
  be the Markov process  with generator \eqref{eq generator d} started from $((x^0_1,\dots,x^0_d),\mathbf{0})$ with $x^0_d<0$ with gravitation $g_n = g/\sqrt{n}$ and $h_n(y) = \sqrt{n}h(y)$.  Fix $x^0_d<v<0$ and define $\tau^n_{v+} = \inf\{t\geq 0 : X^n_d(t) \geq v)$. Let $\mathscr{Y}$ be a diffusion on $(-\infty,0)$ started at $y_0$, whose generator extends the operator $\cA_h$ defined below, which acts on $f\in C^2$ with compact support in $(-\infty,0)$ by
\[ \cA_hf(y) =   \frac{\v}{dh(y)} f''(y) - \frac{\v}{dh(y)} \left(\frac{d-1}{2|y|} + \frac{h'(y)}{h(y)}\right)f'(y).\]
Define $\tau_{v+} = \inf\{ t\geq 0: \mathscr{Y}_t\geq v\}$.
As $n\to \infty$, we have 
\[ \left(X_d^{n}((n^{3/4}t)\wedge  \tau^n_{v+}),\ t \geq 0\right) \to (\mathscr{Y}(t\wedge \tau_{v+}), \ t\geq 0),
\]
in distribution in the Skorokhod space $D(\R_+,\R)$.
\end{theorem}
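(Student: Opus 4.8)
The plan is to treat the rescaled height process as a Markov chain embedded at the collision times and to prove convergence to the diffusion $\mathscr{Y}$ through the martingale problem for $\cA_h$, combining a generator-convergence computation with a tightness estimate and a uniqueness argument. First I would record the one-flight description: at a collision at height $y$ the particle is launched in a uniform direction $\bu\in\bS^{d-1}$ with speed $\sqrt{2g_n|y|}$ and then follows the parabola $\bLambda(\bx,\bu,\cdot)$ until the next collision, whose occurrence is governed by the Poisson cloud of intensity $h_n=\sqrt n\,h$. Writing $y_k$ for the height and $t_k$ for the time of the $k$-th collision, and $\Delta y=y_{k+1}-y_k$, $\Delta t=t_{k+1}-t_k$, the whole proof rests on the asymptotics of the conditional moments $\E[\Delta y\mid y_k=y]$, $\E[(\Delta y)^2\mid y_k=y]$ and $\E[\Delta t\mid y_k=y]$ as $n\to\infty$, together with a uniform bound $\E[|\Delta y|^3\mid y_k=y]=o(1/n)$ that renders the higher Taylor terms negligible. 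Since the collision rate along a trajectory is of order $n^{1/4}h\sqrt{2g|y|}$, one flight lasts a time of order $n^{-1/4}$ in the original clock, hence order $n^{-1}$ after the time change by $n^{3/4}$, while displacing $Y^n$ by order $n^{-1/2}$; so the within-flight motion is asymptotically negligible and $\big(Y^n(n^{3/4}t)\big)$ is well approximated by the pure-jump chain $\big(y_{N^n(n^{3/4}t)}\big)$, where $N^n$ counts collisions.

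The computational heart is the evaluation of these moments. The mean free path scales like $1/(\sqrt n\,h)$ and, because collisions form a Poisson process along the trajectory, the free path is asymptotically exponential; this is what produces the factor $2$ relating $\E[(\Delta y)^2]$ to the square of the mean path and ultimately fixes the diffusion coefficient $\v/(dh)$ in front of $f''$, using $\E[u_d^2]=1/d$ for the vertical component of a uniform direction. The drift is far more delicate: since $\E[u_d]=0$, the naive contribution of the launch velocity vanishes, and the coefficient of $f'$ must be assembled from several genuinely $O(1/n)$ effects acting together — the gravitational curvature $-\tfrac{g_n}{2}(\Delta t)^2$ of the parabola, the dependence of the speed $\sqrt{2g_n|y|}$ on the height swept out during a single flight, and the correlation between the random direction $u_d$ and the free path $\Delta t$, which is induced both by the direction-dependent speed profile along the parabola and by the spatial variation of the density. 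I expect the geometric term $\tfrac{d-1}{2|y|}$ and the logarithmic-derivative term $h'(y)/h(y)$ to emerge jointly from these contributions rather than from any single one; indeed a quick check shows that in dimension $d=2$ the curvature term alone already reproduces the $|y|$-dependent part, whereas for other $d$ the speed-variation and correlation terms are needed to recalibrate its coefficient. Carrying these expansions to order $1/n$, uniformly for $y$ in compact subsets of $(-\infty,v)$, and collecting terms should yield, for $f\in C^2$ with compact support and $\calL^n$ the generator of the time-changed process, $\calL^n f(y)\to\cA_h f(y)$.

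With generator convergence in hand I would establish tightness of $\big(Y^n(n^{3/4}t\wedge\tau^n_{v+})\big)$ in $D(\R_+,\R)$, for which the stopping at level $v<0$ is essential: on $(-\infty,v]$ the density $h$ is bounded away from zero, so the collision rate stays bounded below, the moment estimates hold uniformly, and the process cannot stall. A standard criterion (control of $\E[(\Delta y)^2]$ and $\E[\Delta t]$, or the Aldous condition) then gives tightness, and every subsequential limit is a solution of the stopped martingale problem for $\cA_h$. It remains to invoke well-posedness of that martingale problem on $(-\infty,v)$: the coefficients are $C^2$ there, the operator is uniformly elliptic on compact subsets, no boundary is reached on the right before $\tau_{v+}$, and the left end $-\infty$ is inaccessible in finite time (the operator being of Bessel type under the standing hypotheses), so the solution is unique and equals $\mathscr{Y}(\cdot\wedge\tau_{v+})$. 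Uniqueness upgrades the subsequential convergence to convergence of the full sequence.

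The principal obstacle is the drift computation. Unlike the diffusion coefficient, the coefficient of $f'$ does not arise from a single leading-order quantity but from the $O(1/n)$ correlations between the scattering direction and the free path, and one must expand the collision mechanism carefully enough — keeping the variation of \emph{both} the speed and the density along one parabola — to extract $\tfrac{d-1}{2|y|}+h'(y)/h(y)$ with the correct constant, while simultaneously proving that every remainder is $o(1/n)$ uniformly on compacts. Establishing uniqueness of the limiting martingale problem near the $y\to-\infty$ end is a secondary but necessary technical point.
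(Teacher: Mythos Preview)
Your outline is essentially the paper's strategy: reduce to the skeleton chain of collision heights, establish generator convergence via moment computations (with the drift coming from the $O(n^{-3/4})$ asymmetry $\E[N^n(y,\bu)-N^n(y,-\bu)]$ and the diffusion coefficient from $\E[u_d^2]=1/d$), then pass to the natural time scale and show the in-flight excursions are negligible.

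Two implementation points the paper supplies that you leave unspecified. For the drift, rather than expanding the flight mechanics directly, the paper uses the optional-stopping identity
\[
1=\E\Big[\int_0^{N^n(y,\bu)} h\big(\Lambda_d(s)\big)\,|\dot\bLambda(s)|\,ds\Big]
\]
coming from the compensated Poisson martingale along the trajectory, Taylor-expands the integrand to second order in $t$, and solves for $\E[N^n(y,\bu)]$; subtracting the $\bu$ and $-\bu$ versions isolates the $u_d(h-2|y|h')/h^3$ term cleanly. This is the device that makes your ``principal obstacle'' tractable. For the time change, the paper does not argue directly that $Y^n(n^{3/4}t)$ is close to the skeleton at the collision count $N^n(n^{3/4}t)$; instead it proves joint convergence of $\big(Y^n_{[n\cdot]},\,n^{-3/4}T^n_{[n\cdot]}\big)$ to $\big(Z,\int_0^\cdot (\sqrt{2g|Z_s|}\,h(Z_s))^{-1}ds\big)$ via a law of large numbers for $\sum \Delta t_k$, then composes with the (continuous) right inverse of the time functional. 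Your uniqueness step is handled in the paper by invoking a packaged convergence theorem (Jacod--Shiryaev IX.4.21) rather than tightness plus martingale-problem well-posedness, but the two routes are equivalent here.
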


This scaling regime is further explored in \cite{HRW15}, where more general forces and scattering densities are allowed.  The cutoff at $v$ is necessary because both the time between reflections and the distance between reflections may scale differently when the particle is near the $x$-axis.  The constant density of scatterers is a particular case of the above model and our results agree in this special case with those in \cite{RaviT}.  Although the model considered in \cite{CD09}, with large periodic obstacles, is considerably different from ours (and that in \cite{RaviT}), our results in the case of constant obstacle density agree at the heuristic level with the results in \cite{CD09}.

We note that our scaling in the invariance principle in Theorem \ref{THM GENERAL DENSITY} is anomalous in the sense that we have rescaled the spatial dynamics by a factor $\sqrt{n}$, but time must be scaled by a factor of $n^{3/4}$.  This stands in contrast to typical diffusive scaling where the spatial dynamics are rescaled by a factor of $\sqrt{n}$ and time by a factor of $n$. 

Comparing to our previous case, we see that the form of the limiting generator is the same.

\begin{proposition}
If $h(y)=c|y|^\lambda$ then $\cA_h = \mathcal{G}^{\lambda,c}$.
\end{proposition}

We note that our approach bears some similarities to other work on invariance principles related to anomalous diffusions, see e.g. \cite{MS04}, but our situation is fundamentally different.  In the current setting the particle's speed is unbounded so that the waiting time between reflections can be very small and this contributes to the anomalous scaling.  However, although the scaling is anomalous, our limiting diffusion is not.  This is in contrast to \cite{MS04} and other work on anomalous diffusion where the anomalous scaling arises because waiting times can be heavy tailed.  Since Theorem \ref{THM GENERAL DENSITY} is a result about approximation of a one-dimensional diffusion there are other approaches as well, for example using \cite{MR0365224}.  The general literature on billiards, billiards with potential, and on Lorentz gas models is huge and we do not feel that we can do justice to this body of research. The articles \cite{CD09, RaviT} and references therein are a good point of entry to this field.   

This article is organized as follows. In Section \ref{se method} we consider a simplified model where the particle travels distance exactly one between reflections.  The computations in this case are simpler and the model illustrates the approach we take in the general case.  Section \ref{highdimpower} is devoted to the proofs of Theorems  \ref{theorem power function intro t/r}, \ref{theorem intro sk inv}, and \ref{theorem intro natural}, with Section \ref{se basic estimates} containing technical estimates and Section \ref{se proofs} containing the proofs of the theorems.  

\section{An overview of the method}\label{se method}

Our approach is to employ results developed by Lamperti \cite{L60,L62,L63}.  These papers provide a general framework for establishing recurrence or transience of nonnegative Markov processes. We collect and combine several results of Lamperti in Theorem \ref{theorem Lamperti} below.

Given $A\geq 0$, we will say that a non-negative stochastic process $(X_m,m\geq 0)$ is $A$-recurrent if $\P(X_m \in [0,A] \ \text{i.o.}) =1$.

\begin{theorem}\label{theorem Lamperti}
Let $(X_m,m\geq 0)$ be a Markov chain on $[0,\infty)$ with transition operator $\mathcal T$ and for $\vartheta\in \R$, let 
\[ \mu^\vartheta_k(x) = \E\left[ \left(X^{(2-\vartheta)/2}_{n+1}-X^{(2-\vartheta)/2}_n\right)^k \ \middle| \  X_n=x\right].\]
When $\vartheta=0$ we suppress it in the notation.  That is, we set $\mu_k = \mu_k^0$.
Assume:
\begin{enumerate}
\item There exists $\vartheta <2$ such that, as $x\to \infty$, $x^{1-\vartheta} \mu_1(x) \to a$, $ x^{-\vartheta}\mu_2(x)\to b>0$ with $2a+ b(1-\vartheta)>0$ and for each fixed $k\in \N$, $\mu_k(x) = O(x^{k\vartheta/2})$.  
\item $\mathcal T$ maps the set $C_0(\R_+,\R)$ of continuous functions from $[0,\infty) \to \R$ that vanish at $\infty$ to itself. 
\item\label{4} $\P(\limsup X_n=\infty \mid X_0=x)=1$ for all $x\in [0,\infty)$. 
\end{enumerate}
Let
\[ c= \frac{b\left(1-\vartheta \right) + 2a}{b\left(1-\frac{\vartheta}{2}\right)}.
\]

(a) Regardless of the distribution of $X_0$, 
\[ \left( \frac{1}{n^{\frac{1}{2-\vartheta}}} X_{[nt]},t\geq 0\right) \rightarrow_d \left(\rho_{c} \left(b\left(1-\frac{\vartheta}{2}\right)^2 t\right)^{2/(2-\vartheta)}, t\geq 0\right),
\]
where the convergence is in distribution on the Skorokhod space $D(\R_+,\R)$ and $\left(\rho_{c} (t), t\geq 0\right)$ is a $c$-dimensional Bessel process started at $0$.  

(b) If $2a>b$ then the process is transient.

(c) If $2a<b$ then there exists $A\geq 0$ such that the process is $A$-recurrent.

(d) If, for $\vartheta$ as in Assumption 1, $2x\mu^\vartheta_1(x) - \mu^\vartheta_2(x) = O(x^{-\epsilon})$ for some $\epsilon>0$ then there exists $A\geq 0$ such that the process is $A$-recurrent.
\end{theorem}

\begin{remark}
What we are calling $A$-recurrence is simply called recurrence by Lamperti in \cite{L60,L62,L63}.  
\end{remark}

\begin{proof}
Let $Y_m = X_m^{(2-\vartheta)/2}$.  Since $(Y_m,m\geq 0)$ is Markov, \cite[Lemma 7.1]{L62} shows that the claims of recurrence and transience   for $(Y_m,m\geq 0)$ are settled by \cite[Theorem 3.2]{L60}.  Assumptions 1, 2, and 3 and \cite[Lemma 7.1]{L62} show that the hypotheses of Theorem 4.1 in \cite{L63} are satisfied for $(Y_m,m\geq 0)$.  Combining the conclusions of \cite[Theorem 4.1]{L63} with Assumptions 1 and 3 shows that the hypotheses of Theorem 5.1 in \cite{L62} are satisfied so our claim follows from the conclusion of \cite[Theorem 5.1]{L62} along with translating the results for $(Y_m,m\geq 0)$ back to $(X_m,m\geq 0)$.
\end{proof}

We note that the functional limit theorem of \cite[Theorem 5.1]{L62} actually pertains to the scaled linearly interpolated process rather than the scaled step process, and convergence in distribution on $C(\R_+,\R)$, but the convergence of the scaled step process in distribution on $D(\R_+,\R)$ follows immediately.

In our present context, there is no difference between $A$-recurrence and neighborhood recurrence.

\begin{proposition}\label{prop A-to-eps}  
If $(X_{k,d})_{k\geq 0}$ is a Markov process with transition operator \eqref{eq transition dth}, then $(X_{k,d})_{k\geq 0}$ is neighborhood recurrent if and only if $(|X_{k,d}|)_{k\geq 0}$ is $A$-recurrent for some $A \geq 0$.
\end{proposition}

\begin{proof}
Fix $\eps >0$ and observe from \eqref{eq transition dth} that $\min_{0\leq x\leq A} \P_x(|X_{1,d}| < \eps) >0$.  Combined with the strong Markov property this implies that $\P_0(|X_{k,d}| < \eps \ i.o.) =1$ since $\P_0(|X_{k,d}| \leq A \ i.o.) =1$.
\end{proof}

With Theorem \ref{theorem Lamperti} in hand, the idea of our proofs is essentially straightforward, but the calculations become quite involved in the general case.  Thus, before getting into the true model, we show how the method works in a simplified model where $\lambda=0$ and the particle travels distance exactly equal to one between reflections. 

\subsection{Motion with deterministic distance between reflections}\label{section distance 1}

This section is a warm up, in the sense that we analyze a simplified model, to develop a sense for results that we can expect in a more realistic and hence more complicated situation. Specifically, we assume that the 
distance between any two consecutive reflections measured along the trajectory of the particle is exactly one.  In this model, upon reflection at $\bx \in \R^d$, the particle starts its path in a uniform direction $\bu \in \bS^{d-1}$ and then travels along the parabola \eqref{equation basic parabola 1} (with $t$ measuring the time since the last reflection) until it has traveled distance exactly one, at which point it reflects again.  Let $(\bX(t), t\geq 0)$ be the path of such a particle and let the discrete time process $(X_d^*(k), k\in \N_0)$ record the positions of $(X_d(t),t\geq 0)$ at the reflection times. Note that this is not the same as sampling of $X_d$ at equal or identically distributed time intervals because the velocity of $X_d$ increases with $|X_d|$ and the times between scattering events become smaller on average. 
The process $(X_d^*(k), k\in \N_0)$ is a Markov chain with transition operator $U$ that acts on $C^2$ function $f$ with compact support in $(-\infty,0)$ by
\[ (U f)(y) = \int_{\bS^{d-1}} f(\Lambda_d(y\be_d , \bu , t(y\be_d,\bu))) \sigma(d\bu)\]
where $t(\bx,\bu)$ is the time it takes to travel distance 1 along the parabola in \eqref{equation basic parabola 1} with initial position $\bx$ and initial velocity in the direction of $\bu$.  That is, $t(\bx,\bu) = \inf\{s : \ell(\bx,\bu,s) >1\}$ where 
\[\ell(\bx,\bu,t) = \int_0^t \sqrt{\vsd (1-u_d^2) + \left(\vd u_d-gs\right)^2}\ ds.\]

\begin{theorem}\label{theorem det-dist}
The process $(X_d^*(m), m\in \N_0)$ is neighborhood recurrent if $d\leq 3$ and transient if $d \geq 4$. 
\end{theorem}

\begin{proof}
We will apply  Theorem \ref{theorem Lamperti}
to the process $(|X_d^*(m)|, m\in \N_0)$ in place of $(X_m,m\geq 0)$, with $\vartheta=0$.
Conditions $2$ and $3$ of Theorem \ref{theorem Lamperti} are easy to check, leaving the problem of finding the limits in part $1$.  In order to apply Theorem \ref{theorem Lamperti} we need to analyze $\mu_1(y)=|y| \E_y( X^*_d(1) - y)$ and $ \mu_2(y)=\E_y\left[(X^*_d(1) - y)^2\right] $ as $y$ tends to $-\infty$.  The key to doing this is to analyze how $t(y\be_d,\bu)$, the time between reflections, depends on $y$.  It is easy to check that we have the monotonicity relation $t(\bx,-\be_d)\leq t(\bx,\bu)\leq t(\bx,\be_d)$, which is intuitive because it takes the longest to travel straight up and the shortest to travel straight down.  Moreover, assuming $x_d\leq -1$, as we will for the remainder, we can explicitly compute 
\[t(\bx,\be_d) = \sqrt{\frac{2}{g}} \left(\sqrt{|x_d|} - \sqrt{|x_d|-1}\right) 
\quad \textrm{and} \quad t(\bx,-\be_d) = \sqrt{\frac{2}{g}} 
\left(\sqrt{|x_d|+1} - \sqrt{|x_d|}\right).
\]
From this, one observes that $\sqrt{|y|}t(y\be_d,\pm \be_d) \to (\sqrt{2g})^{-1/2}$ 
as $y\to -\infty$ and, consequently,
\begin{equation}\label{equation time scale} 
\lim_{y\to -\infty} \sqrt{|y|}t(y\be_d, \bu) \to \frac{1}{\sqrt{2g}},
\end{equation}
uniformly in $\bu$. 
Let $\ell_t(\bx,\bu,t), \ell_{tt}(\bx,\bu,t)$ and $ \ell_{ttt}(\bx,\bu,t)$ denote the first, second and third partial derivatives, resp., of $\ell(\bx,\bu,t)$ in the third variable.
We have $\ell_t(\bx,\bu,0) = \vd$ and $\ell_{tt}(\bx,\bu,0) = -gu_d$. 
It follows from the definition of $t(\bx,\bu)$ that $\ell(\bx,\bu ,t(\bx, \bu))=1$.  Taylor expanding $\ell$ in the $t$ variable yields
\[ \begin{split} 1& = \ell(\bx,\bu,t(\bx, \bu)) \\
&=\vd t(\bx, \bu) - \frac{gu_d}{2} t(\bx, \bu)^2 + \frac{\ell_{ttt}(\bx,\bu,\alpha)}{6}t(\bx, \bu)^3
\end{split}\]
for some $\alpha =\alpha(\bx,\bu) \leq t(\bx,\be_d)$.  Rearranging, this yields the relation
\begin{equation}\label{eq ell expansion} t(\bx, \bu) = \frac{1}{\vd}\left(1 + \frac{gu_d}{2} t(\bx, \bu)^2 - \frac{\ell_{ttt}(\bx,\bu,\alpha)}{6}t(\bx, \bu)^3\right).\end{equation}

We have
\begin{equation}\label{eq expectation split} 
|y| \E_y( X^*_d(1) - y) = \E_y\left(U_d\sqrt{2g} |y|^{3/2}t(y\be_d, \bU)\right) 
+ \E_y\left(-\frac{g}{2}|y|t(y\be_d, \bU)^2\right).
\end{equation}
By \eqref{equation time scale}, the second term in \eqref{eq expectation split} converges to $-1/4$ as $y\to -\infty$.  To analyze the first term, we substitute \eqref{eq ell expansion} and use  $\E_y(U_d)=0$ to find that
\begin{equation}\label{eq expectation split 2} 
\E_y\left(U_d\sqrt{2g} |y|^{3/2}t(y\be_d, \bU)\right) =  \E_y\left(\frac{gU^2_d}{2}|y| t(y\be_d, \bU)^2\right) - \E_y\left(U_d\frac{\ell_{ttt}(y\be_d,\bU,\alpha(y,\bU))}{6}|y|t(y\be_d,\bU)^3\right).
\end{equation}

From \eqref{equation time scale} we see that 
\[\lim_{y\to-\infty}  \E_y\left(\frac{gU^2_d}{2}|y| t(y\be_d, \bU)^2\right) = \frac{1}{4} \E(U_d^2) = \frac{1}{4d}.\]
Furthermore, straightforward but tedious calculations show that $\ell_{ttt}(y\be_d,\bu,t) = O(|y|^{-1/2})$ as $y\to -\infty$ uniformly in $\bu$, and $0\leq t\leq t(y\be_d,\be_d) $ which, combined with \eqref{equation time scale}, show that the second term in \eqref{eq expectation split 2} converges to $0$ as $y\to -\infty$.  Therefore
\[-a:=\lim_{y\to-\infty}|y|\mu_1(y)= \lim_{y\to -\infty} |y| \E_y( X^*_d(1) - y) = \frac{1}{4d} - \frac{1}{4} = \frac{1-d}{4d}.\]
The negative sign is because Lamperti's processes are positive while ours are negative.  Similarly, using \eqref{equation time scale} we see that
\[ 
b:= \lim_{y\to-\infty} \mu_2(y)= \lim_{y\to-\infty} \E_y\left[(X^*_d(1) - y)^2\right] 
=  \lim_{y\to-\infty} \E_y\left[\left(U_d\sqrt{2g|y|} t(y\be_d, \bU)
-\frac{g}{2}t(y\be_d, \bU)^2 \right)^2\right] = \frac{1}{d}.
\]
This shows that the limits in Condition 1 of Theorem \ref{theorem Lamperti} exist.  Moreover, 
\[ 2a-b= \frac{d-1}{2d} - \frac{1}{d} = \frac{d-3}{2d}.
\]
The claims of Theorem \ref{theorem det-dist} can now be read off from Theorem \ref{theorem Lamperti}.  Since $2a-b$ is positive if $d\geq 4$, the process is transient in this case.  Moreover, $2a-b$ is negative if $d\leq 2$ so the process is $A$-recurrent in this case for some $A\geq 0$. In the case $d=3$, we have $2a-b=0$, so this is the critical case.  One can verify that when $d=3$,  $2|y| \mu_1(y)-\mu_2(y) = O(|y|^{-\epsilon})$ for sufficiently small $\epsilon>0$ and, consequently, the process is $A$-recurrent in this case as well.  We leave this calculation in the present toy model to the reader since we do the analogous (more difficult) calculation for our main model below.  A straightforward argument using the Markov property as in Proposition \ref{prop A-to-eps} shows that $A$-recurrence for any $A\geq 0$ implies neighborhood recurrence for $(|X_d^*(m)|, m\in \N_0)$.
\end{proof}

\section{The general model}\label{highdimpower}
In this section we address the general model with generator \eqref{eq generator d} where $h$ is of the form $h(y)=c|y|^\lambda$ for some $\lambda \geq 0$ and $c>0$.  We prove some limit theorems and results on transience and recurrence. Although Section \ref{se method} illustrates our methods, the results in this section are technically more difficult because we must control the distance the particle travels between reflections as well as the time between reflections in order to establish our invariance principles.

\subsection{Basic Estimates}\label{se basic estimates}
Recall that for $y\leq 0$, we define $N(y,\bu)= N(y\be_d,\bu)$ where $N(\bx,\bu)$ is defined in \eqref{eq inter reflection} for $\bx\in\R^d$.

\begin{lemma}\label{lemma large y distribution}
For every $t\geq 0$ we have
\[\lim_{y\to -\infty} \sup_{\bu\in \bS^{d-1}} \left|\P\left(\v h(y)N(y,\bu) >t\right) - e^{-t}\right|=0.\] 
\end{lemma}

\begin{proof}
We use \eqref{eq inter reflection} and the substitution $w = \v h(y) s$ to see that
\begin{align*} 
-\log&\left(\P\left(\v h(y)N(y,\bu) >t\right)\right) 
=
-\log\left(\P\left(N(y,\bu) >\frac{t}{\v h(y)}\right)\right) 
\\  
&= \int_0^{t} \frac{ h\left(y+\frac{u_d}{h(y)}w - \frac{1}{4|y|h(y)^2}w^2\right)}{\v h(y)} \sqrt{\vs (1-u_d^2)+\left(\v u_d-\frac{g}{\v h(y)}w\right)^2}dw  \\
&= \int_0^{t} \frac{h\left(y+\frac{u_d}{h(y)}w - \frac{1}{4|y|h(y)^2}w^2\right)}{h(y)} \sqrt{ (1-u_d^2)+\left(u_d-\frac{1}{2|y| h(y)}w\right)^2}dw.
\end{align*}
For $h(y)=c|y|^\lambda$, we have
\[ \lim_{y\to-\infty} \sup_{(w,\bu)\in [0,t]\times\bS^{d-1}}\left|  \frac{h\left(y+\frac{u_d}{h(y)}w - \frac{1}{4|y|h(y)^2}w^2\right)}{h(y)} - 1\right|=0,\]
and the lemma follows.
\end{proof}

In fact, this convergence in distribution can be extended to convergence of moments.

\begin{lemma} \label{lemma large y bound}
For fixed $p\geq 1$,
\[\E \left[ \left(\max\{\v h(y),1\}N(y,\bu)\right)^p\right] \]
is bounded uniformly in $y$ and $\bu \in \bS^{d-1}$.
\end{lemma}

\begin{proof}
We handle the cases $y\leq -1$ and $y>-1$ separately.  For $-1\leq y\leq 0$ 
there is a finite longest time for a parabolic path started with $-1\leq y\leq 0$ to leave $[-1,0]$.  Outside this interval 
$h$ is bounded below by a strictly positive constant. Hence, once the particle is outside $[-1,0]$, it will encounter a scatterer at some strictly positive rate. This implies that all of the $N(y,\bu)$ with $-1\leq y\leq 0$ are stochastically dominated by a single random variable with an exponential tail. The lemma easily follows in this case.  

We now turn to the case $y\leq -1$.  A monotonicity argument shows that 
\[ \P(N(y,\bu) > t) \leq \exp\left[-\int_0^t c\left( \frac{g}{2}s^2 -  \v s - y\right)^\lambda\left|\v - g s\right| ds \right].
\]
Fix $0< \eps < 1/4$.  We need to control the amount of time the particle can spend above $\epsilon$, since this is where the collision rate is low and $\P(N(y,\bu) > t)$ decreases slowly in this region.  Define 
\begin{align}\label{s22.5}
 s_-(\bu) = \inf\left\{ s \geq 0 : y+u_d\v s - \frac{g}{2}s^2 =-\eps \right \} 
\end{align}
and
\[  s_+(\bu) = \sup\left\{ s \geq 0 : y+u_d\v s - \frac{g}{2}s^2 =-\eps \right \} .\]
Monotonicity arguments show that
\begin{align}\label{s22.6} 
s_-(\bu) \geq s_-(\be_d) = \sqrt{\frac{2}{g}}\left(\sqrt{|y|}- \sqrt{\eps}\right)
\end{align}
and
\[ s_+(\bu) \leq s_{+}(\be_d) = \sqrt{\frac{2}{g}}\left(\sqrt{|y|}+ \sqrt{\eps}\right).\]
To simplify notation, let us use $s_\pm:=s_\pm(\be_d)$.  This leads to the bounds
\begin{equation}\label{eq h tail bounds}  \P(N^n(y,\bu) >t) \leq \begin{cases}
\vspace{.2cm} \exp\left[-h(-\epsilon) t \left(\v  - \frac{g}{2} t\right)\right], & t\leq s_- ,\\
\exp\left[ -h(-\epsilon) \left(\v [s_-+s_+-t] + \frac{g}{2}[t^2-s_-^2-s_+^2] \right)\right], & t\geq s_+.
 \end{cases}
\end{equation}

An application of Fubini's theorem shows that
$\E(R^p)
=  p\int_0^\infty t^{p-1} \P(R>t) dt$
for any non-negative random variable $R$.
Using this and \eqref{eq h tail bounds} we find that
\begin{align}\label{d24.3}
\E(N(y,\bu)^p) &\leq p\int_0^{s_-/4}t^{p-1} \exp\left[-\int_0^t c\left( \frac{g}{2}s^2 -  \v s - y\right)^\lambda\left(\v - g s\right)ds\right]dt \\
&\quad+ p\int_{s_-/4}^{4s_+} t^{p-1}\exp\left[-h(-\epsilon) s_- \left(\v  - \frac{g}{8} s_-\right)/4\right] dt \nonumber\\
 &\quad+ p\int_{4s_+}^\infty t^{p-1}\exp\left[ -h(-\epsilon) \left(\v \left[s_-+s_+-t\right] + \frac{g}{2}\left[t^2-s_-^2-s_+^2\right] \right)\right] dt.\nonumber
\end{align}
The first integral is the most challenging, so we take care of the second and third integrals first.  Since
 \begin{equation*}%\label{s_- bound} 
\v  - \frac{g}{8} s_- =  \v - \frac{\sqrt{2g}}{8} (\sqrt{|y|} - \sqrt{\epsilon}) \geq \frac{3}{4} \v,
\end{equation*}
 we have
\begin{align}\label{d24.2}
 \lim_{y\to-\infty} p \left(\v h(y)\right)^p\int_{s_-/4}^{4s_+} t^{p-1}\exp\left[-h(-\epsilon) s_- \left(\v  - \frac{g}{8} s_-\right)/4\right] dt =0
\end{align}
because the integral term decays exponentially in $|y|$.  Similarly we have
\begin{align}\label{d24.1}
\lim_{y\to-\infty} p \left(\v h(y)\right)^p \int_{4s_+}^\infty t^{p-1}\exp\left[ -h(-\epsilon) \left(\v \left[s_-+s_+-t\right] + \frac{g}{2}\left[t^2-s_-^2-s_+^2\right] \right)\right] dt =0.
\end{align}
For the first integral in \eqref{d24.3}, 
use the Mean Value Theorem to see that for $0\leq t \leq s_-/4$,
\begin{align*}  
-\int_0^t &c\left( \frac{g}{2}s^2 -  \v s - y\right)^\lambda\left(\v   - g s\right)ds 
 =\frac{c}{\lambda+1}\left[\left( \frac{g}{2}t^2 -  \v t - y\right)^{\lambda+1} - |y|^{\lambda+1}\right] \\
& \leq \frac{c}{\lambda+1}\left[\left(  - \frac{ \v}{2} t - y\right)^{\lambda+1} - |y|^{\lambda+1}\right]
\leq - c \frac{ \v}{2} t \inf\left\{|z|^\lambda:  - \frac{ \v}{2} t - y \leq z \leq -y\right\}
\\
& 
\leq - c \frac{ \v}{2} t  \left| - \frac{ \v}{2} s_+/4 - y\right|^\lambda
\leq  - C \v \left| y\right|^\lambda t
,\end{align*} 
where $C>0$ is a constant depending on $\lambda$ but not $y$.  Consequently, we have
\begin{align*}
p\int_0^{s_-}t^{p-1} &\exp\left[-\int_0^t c\left( \frac{g}{2}s^2 -  \v s - y\right)^\lambda\left(\v - g s\right)ds\right]dt \\
& \leq p\int_0^\infty t^{p-1} \exp\left( - C \v \left| y\right|^\lambda t\right)dt  = \frac{p!}{ C^p (\vs)^{p/2} |y|^{p\lambda}}.
\end{align*}
This and \eqref{d24.3}-\eqref{d24.1} prove the result for $y\leq -1$.  
\end{proof}

The next lemma gives a uniform version of the classical result that convergence in distribution together with bounded moments implies the convergence of moments. 

\begin{lemma}\label{lemma moments monomial}
For every $p\geq 1$ we have
\[\lim_{y\to -\infty} \sup_{\bu\in \bS^{d-1}} \left|\E \left[ \left(\v h(y)N(y,\bu)\right)^p\right] - \int_0^\infty pt^{p-1}e^{-t}dt\right|=0.\] 
\end{lemma}

\begin{proof}
Let $B(y,\bu,r) = \left\{ \v h(y)N(y,\bu) \leq r\right\}$ and define $q$ by $1/q + p/(p+1) = 1$.  Also, let $\eta(y) = \v h(y)$. Then
\[ \begin{split}
\E \left(\left[\eta(y)^pN(y,\bu)^p\right] - \left[\eta(y)^pN(y,\bu)^p\right] \land r^p  \right)
&\leq 
\E \left( \left[\eta(y)^pN(y,\bu)^p\right] \cf_{B^c(y,\bu,r)} \right) 
\\
&\leq
\E \left( \eta(y)^{p+1}N(y,\bu)^{p+1}\right)^{p/(p+1)}  \left(\P( \eta(y) N(y,\bu) > r )\right)^{1/q}\\
&\leq 
\E \left( \eta(y)^{p+1}N(y,\bu)^{p+1}\right)^{p/(p+1)}\E(  \eta(y) N(y,\bu)   )^{1/q} r^{-1/q}.
\end{split}\]
Both expectations are uniformly bounded by Lemma \ref{lemma large y bound} so the bound goes uniformly to 0 as $r\to\infty$.  The proof is completed by noting that it follows from Lemma \ref{lemma large y distribution} that for every $r\geq 0$,
\[ \lim_{y\to-\infty}\sup_{\bu\in \bS^{d-1}} \left|\E \left[ \left(\v h(y)N(y,\bu)\right)^p\wedge r^p\right] - \int_0^rpt^{p-1}e^{-t}dt\right|=0. \qedhere\]
\end{proof}

The next lemma is needed to control lower order fluctuations.  This is where the averaging occurs and it becomes important that the scattering distribution has mean $0$.

\begin{lemma}\label{de16.6}
Let $\bU$ be uniformly distributed on $\bS^{d-1}$ and let $N(y,\bU)$ be distributed like $N(y,\bu)$ conditional on $\bU=\bu$.  We then have 
\[ \lim_{y\to-\infty} \left| \left( h(y)^2\sqrt{2g|y|^3}\right) \E\left[ U_d N(y,\bU) \right] - \frac{1+2\lambda}{2d} \right| =0.\]
\end{lemma}

\begin{proof}
Let  
\begin{align*}
H(y,\bu,t) &= h\left(y+u_d\v t -\frac{g}{2}t^2\right),\\
M(y,\bu,t) &= \sqrt{\vs (1-u_d^2)+\left(u_d\v-gt\right)^2},\\
F(y,\bu,t) & = \int_0^t H(y,\bu,s)M(y,\bu,s)  ds.
\end{align*}
Using the change of variables $z= F(y,\bu,t)$ and the density of $N(y,\bu)$ derived from \eqref{eq inter reflection} one finds that 
\begin{align}\label{s9.1}
 1= \E[ F(y,\bu, N(y,\bu))] \quad \textrm{and}\quad 2= \E[ F(y,\bu, N(y,\bu))^2] ,
\end{align}
for all $y< 0$ and $\bu \in \bS^{d-1}$.
Taylor expanding $F$ in $t$ about $0$, we find that for $t< (2|y|/g)^{1/2}$,
\begin{equation}\label{eq F_taylor monomial} F(y,\bu, t) = h(y)\v t + F''(y,\bu, T(y,\bu,t)) \frac{t^2}{2}, \end{equation}
for some $0\leq T(y,\bu, t) \leq t$.  Let $B(y,\bu) = \{N(y,\bu) < 1\}$.  We then have
\begin{equation}\label{eq decomp monomial} 1= \E[ F(y,\bu, N(y,\bu))]  = \E[ F(y,\bu, N(y,\bu)) \cf_{B}] + \E[ F(y,\bu, N(y,\bu))\cf_{B^c}]\end{equation}
and, by the Cauchy-Schwarz inequality and \eqref{s9.1},
\[  \E[ F(y,\bu, N(y,\bu))\cf_{B^c}] \leq \sqrt{2 \P(N(y,\bu) \geq 1)}. \]
By Lemma \ref{lemma large y bound} we see that for every $r\geq 0$
\[  \lim_{y\to-\infty} \sup_{\bu \in \bS^{d-1}} |y|^r \P(N(y,\bu) \geq 1) =0.\]
Consequently 
\begin{equation}\label{eq F-1 monomial}
 \lim_{y\to-\infty} \sup_{\bu \in \bS^{d-1}} |y|^r  \E[ F(y,\bu, N(y,\bu))\cf_{B^c}]  = 0 .
\end{equation}
Similarly, for every $r\geq 0$ and $p\geq 1$ we see that 
\begin{equation}\label{eq M-1 monomial} 
\lim_{y\to-\infty} \sup_{\bu \in \bS^{d-1}} |y|^{r}\E \left[ N(y,\bu)^p \cf_{B^c}\right] = 0.
\end{equation} 
For $y$ such that $y\leq -g/2$, substituting \eqref{eq F_taylor monomial} into the first expectation on the right hand side of \eqref{eq decomp monomial} and solving for $\E( N(y,\bu) \cf_{B})$ yields
\begin{align*}  \E & (N(y,\bu) \cf_{B})\\ 
&=  \frac{1}{h(y)\v } \left(1  -  \E[ F(y,\bu, N(y,\bu))\cf_{B^c}]  
 - \frac{1}{2}\E\left[F''(y,\bu, T(y,\bu,N(y,\bu)))N(y,\bu)^2\cf_{B} \right]\right).
\end{align*}
Conditioning $\E (U_dN(y,\bU) \cf_{B})$ on $\{\bU=\bu\}$ and using the fact that $\E(U_d)=0$, we have
\begin{align} \label{d27.1}
h(y)^2\sqrt{2g|y|^3} \E(U_dN(y,\bU)\cf_{B}) 
 = &-h(y)|y|  \E[ U_d F(y,\bU, N(y,\bU))\cf_{B^c}] \\
& - \frac{h(y)|y|}{2} \E\left[U_dF''(y,\bU, T(y,\bU,N(y,\bU)))N(y,\bU)^2\cf_{B} \right] . \nonumber
\end{align}
The first term on the right hand side vanishes as $y\to-\infty$ by \eqref{eq F-1 monomial}.  For the second term, observe that
\begin{align*} 
F''(y,\bu,t) & =  \ H'(y,\bu,t)M(y,\bu,t) +H(y,\bu,t)M'(y,\bu,t) \\
&=  \left(u_d\v  -gt\right)h'\left(y+u_d\v t -\frac{g}{2}t^2\right)M(y,\bu,t)\\
& \quad -  h\left(y+u_d\v t -\frac{g}{2}t^2\right) \frac{g\left(\v u_d-gt\right) }{ \sqrt{\vs  (1-u_d^2)+\left(\v u_d-gt\right)^2}}.
\end{align*}
Elementary calculations show that 
\[\lim_{y\to-\infty} \sup_{(t,\bu)\in [0,1]\times \bS^{d-1}} \left| \frac{ F''(y,\bu,t)}{h(y)} -gu_d(-2\lambda -1) \right| =0.\] 
Therefore, a combination of Lemma \ref{lemma moments monomial} and \eqref{eq M-1 monomial} shows that
\[\lim_{y\to\infty} \frac{h(y)|y|}{2} \E\left[U_dF''(y,\bU, T(y,\bU,N(y,\bU)))N(y,\bU)^2\cf_{B} \right] = - \frac{1+2\lambda }{2} \E(U_d^2) =   -\frac{1+2\lambda }{2d}. \]
The lemma follows by combining this with \eqref{d27.1}.
\end{proof}

\begin{proposition}\label{proposition asymptotic mean variance}
Let $(Y_m,m\geq 0)$ be the Markov chain with transition operator \eqref{eq transition dth} and $h(y)=c|y|^\lambda$.  For $x\geq 0$, define
\[ \hat\mu_k(x) = \E\left[ (|Y_1| - x)^k \big| \ Y_0=-x\right] .\]
We then have $\sup_x x^{\lambda k}\mu_k(x) <\infty$ for all $k\geq 1$ and  
\[\lim_{x\to\infty} x^{1+2\lambda}\hat\mu_1(x) =\frac{d-1-2\lambda}{2dc^2} \quad \textrm{and} \quad \lim_{x\to\infty}x^{2\lambda} \hat\mu_2(x) = \frac{2}{dc^2}.\]
\end{proposition}

\begin{proof}
First note that, by \eqref{equation basic parabola 1} and \eqref{eq transition dth},
\begin{align*}
x^{\lambda k}\hat\mu_k(x) &=  \sum_{i=0}^k (-1)^{k-i}\left(\frac{g}{2}\right)^i (2gx)^{(k-i)/2} x^{\lambda k} \E\left[ U_d^{k-i} N(-x,\bU)^{i+k}\right] \\ 
&\leq  \sum_{i=0}^k\left(\frac{g}{2}\right)^i (2gx)^{(k-i)/2} x^{\lambda k} \E\left[N(-x,\bU)^{i+k}\right],
\end{align*}
which is bounded, when $x\to \infty$, by Lemma  \ref{lemma large y bound}.  Observe that 
\[x^{1+2\lambda} \hat\mu_1(x) = \E\left( \frac{gxh(-x)^2}{2c^2} N(-x,\bU)^2 - \frac{U_d\sqrt{2gx^3} h(-x)^2}{c^2} N(-x,\bU) \right).\]
Lemma \ref{lemma moments monomial} implies that 
\[ \lim_{x\to\infty}  \E\left((1/2) g xh(-x)^2N(-x,\bU)^2\right) = 1/2,\]
while Lemma \ref{de16.6} shows that 
\[ \lim_{x\to\infty} \E\left( U_d\sqrt{2gx^3} h(-x)^2 N(-x,\bU) \right) = \frac{1+2\lambda}{2d}.\] 
Consequently,
\[\lim_{x\to\infty} x^{1+2\lambda} \hat\mu_1(x) = \frac{1}{2c^2}-\frac{ 1+2\lambda}{2dc^2} =\frac{d-1- 2\lambda}{2dc^2}  .\]
Similarly, we see that
\begin{align}\label{d28.1}
 \hat\mu_2(x) & = \E\left[\left( \frac{g}{2}N(-x,\bU)^2- U_d\sqrt{2gx} N(-x,\bU) \right)^2 \right]\\
& = \E\left[ 2gx U_d^2 N(-x,\bU)^2 - gU_d\sqrt{2gx} N(-x,\bU)^3 +  \frac{g^2}{4}N(-x,\bU)^4\right].\nonumber
\end{align} 
Lemma  \ref{lemma moments monomial} implies that
\begin{align}\label{d28.2}
\lim_{x\to\infty}
\E\left[  - gU_d\sqrt{2gx} N(-x,\bU)^3 +  \frac{g^2}{4}N(-x,\bU)^4\right] =0,
\end{align}
and
\begin{align*}
\lim_{x\to\infty}
\E\left[x^{2\lambda } 2gx U_d^2 N(-x,\bU)^2\right] =\frac{2}{dc^2}.
\end{align*}
This, \eqref{d28.1} and \eqref{d28.2} yield
$x^{2\lambda } \hat\mu_2(x) \to \frac{2}{dc^2}$ as $x\to\infty$.
\end{proof}

The next proposition contains an estimate needed in the case when $d=3$ and $\lambda=0$.

\begin{proposition}\label{prop recurrence3d}
If $d=3$ and $\lambda=0$ then $2x\hat\mu_1(x) - \hat\mu_2(x) \leq O(x^{-\delta})$ for some $\delta>0$ as $x\to \infty$.
\end{proposition}

\begin{proof}
For simplicity, we assume $c=1$; the proof is similar in other cases.  Observe that
\begin{align}\label{d30.1}
2x \hat \mu_1(x) -\hat \mu_2(x) &= \E\left( gx(1-2U_3^2)N(-x,\bU)^2\right) - \E\left(U_3\sqrt{8gx^3}  N(-x,\bU) \right)\\
&\quad  +\E \left(  gU_3\sqrt{2gx} N(-x,\bU)^3\right) - \E\left( \frac{g^2}{4}N(-x,\bU)^4 \right).\nonumber
\end{align}
If $\delta < 1$, then
\begin{align}\label{d30.2}
 \lim_{x\to\infty} x^{\delta}\E \left(  gU_3\sqrt{2gx} N(-x,\bU)^3\right) =0 \quad \textrm{and}\quad \lim_{x\to\infty} x^{\delta} \E\left( \frac{g^2}{4}N(-x,\bU)^4 \right) =0,
\end{align}
by Lemma \ref{lemma moments monomial}.

For the remaining terms we need more careful estimates. 
Consider any $\eps\in(0,1/2)$, $ r>1$ and $p\geq 1$. Let $q$ be so large that
$-p/2 - q\eps/2 < -r$.
By Lemma \ref{lemma large y bound}, for all $\bu$ and large $x$,
\begin{align*}
\E&\left[ N(-x,\bu)^p \cf_{\left\{ \sqrt{x}N(-x,\bu) > x^\epsilon\right\}}\right]
\leq 
\E\left[ N(-x,\bu)^{2p}\right]^{1/2}
\E\left[
 \cf_{\left\{ \sqrt{x}N(-x,\bu) > x^\epsilon\right\}}\right]^{1/2}\\
&\leq c_1 x^ {-p/2} 
\P\left( (\sqrt{x}N(-x,\bu))^q > x^{q\epsilon}\right)^{1/2}
\leq c_1 x^ {-p/2} 
\left(\E\left[ (\sqrt{x}N(-x,\bu))^q\right] / x^{q\epsilon}\right)^{1/2}
\leq c_2 x^ {-p/2}  x ^{-q\eps/2}.
\end{align*}
It follows that
\begin{equation} \label{eq epsilon} \lim_{x\to \infty} \sup_{\bu\in\bS^{d-1}} x^r \E\left[ N(-x,\bu)^p \cf_{\left\{ \sqrt{x}N(-x,\bu) > x^\epsilon\right\}}\right] =0.
 \end{equation}
It is easy to see that for all $\bu\in\bS^{2}$ we have 
\[ \P(N(-x,-\be_3)>t) \leq \P(N(-x,\bu)>t)\leq \P(N(-x,\be_3)>t).\]
Consequently, for sufficiently large $x$ we have
\[\begin{split} \E\left( gxN(-x,\bU)^2\right) & \leq  \E\left( gxN(-x,\be_3)^2\right)\\
& =   \E\left( gxN(-x,\be_3)^2 \cf_{\left\{ \sqrt{x}N(-x,\be_3) \leq x^\epsilon\right\}}\right) +  \E\left( gxN(-x,\be_3)^2 \cf_{\left\{ \sqrt{x}N(-x,\be_3) > x^\epsilon\right\}}\right) \\
&= 2g \int_0^{x^{\epsilon}} t \P( \sqrt{x} N(-x,\be_3)>t) dt + o(x^{-r})\\
& =  2g \int_0^{x^{\epsilon}} t \exp\left(-\int_0^{tx^{-1/2}} \left|\sqrt{2gx}-gs\right| ds \right)dt + o(x^{-r}) \\
& =  2g \int_0^{x^{\epsilon}} t \exp\left(-\sqrt{2g}t+\frac{gt^2}{x} \right)dt + o(x^{-r}) \\
& =  2g \int_0^{x^\epsilon} t \exp\left(-\sqrt{2g}t\right)\left(\exp\left(\frac{gt^2}{x}\right) -1 \right)dt + 2g\int_0^{x^\epsilon} t \exp\left(-\sqrt{2g}t\right) dt + o(x^{-r}) \\
&\leq O(x^{-1}) +1 + o(x^{-r}) 
.\end{split}\]
Similarly, for some $\nu>0$
\[\begin{split}  \E\left(2gxU_3^2N(-x,\bU)^2\right) & \geq  \frac{2gx}{3} \E\left(N(-x,-\be_3)^2\right) \\
 & = \frac{4g}{3} \int_0^\infty t \exp\left(-\sqrt{2g}t-\frac{gt^2}{x} \right)dt \\
 & \geq (e^{-gx^{2\epsilon-1}}-1) \frac{4g}{3} \int_0^{x^\epsilon} t \exp\left(-\sqrt{2g}t \right)dt + \frac{4g}{3} \int_0^{x^\epsilon} t \exp\left(-\sqrt{2g}t \right)dt \\
 & \geq O(x^{2\epsilon -1}) + o(e^{-\nu x^{\epsilon}}) + \frac{2}{3}
.\end{split} \]
Combining the last two estimates, we obtain, 
\begin{align}\label{d30.3}
\E\left( gx(1-2U_3^2)N(-x,\bU)^2\right) \leq \frac{1}{3} + O(x^{2\epsilon -1}).
\end{align}
Using \eqref{eq epsilon} and arguing as in the proof of Lemma \ref{de16.6} (and using the notation there) we have 
\begin{align*} 
- \E&\left(U_3\sqrt{8gx^3}  N(-x,\bU) \right)
=- \sqrt{8gx^3} \E\left(U_3 N(-x,\bU)\cf_{\{ \sqrt{x}N(-x,\bU)\leq x^\epsilon\}} \right) + o(x^{-r}) \\
& = -x \E\left[ U_3 F''(-x,\bU,T(-x,\bU,N(-x,\bU))) N(-x,\bU)^2 \cf_{\{ \sqrt{x}N(-x,\bU)\leq x^\epsilon\}} \right] + o(x^{-r}),
\end{align*}
where $0\leq T(-x,\bU,N(-x,\bU)) \leq x^{\epsilon - 1/2}$ and 
\[ F''(-x,\bu,t) = \frac{ g\left( u_3 - \frac{gt}{\sqrt{2gx}} \right)}{ \sqrt{1-u_3^2 + \left(u_3-\frac{gt}{\sqrt{2gx}}\right)^2}} = J\left(\bu, \frac{gt}{\sqrt{2gx}}\right) \]
with
\[ J(\bu,t) := \frac{ g\left( u_3 - t \right)}{ \sqrt{1-u_3^2 + \left(u_3-t\right)^2}} .\]
Note that, for sufficiently small $T$, $J(\bu,t)$ is continuously differentiable on $\bS^2\times [-T,T]$ and, consequently, there exists a constant $C$ such that
\[ | J(\bu,t) - gu_3| = | J(\bu,t) - J(\bu,0)| \leq C |t|.\] 
Therefore
\[\begin{split} 
- \E\left(U_3\sqrt{8gx^3}  N(-x,\bU) \right) &= -gx \E\left[ U^2_3  N(-x,\bU)^2 \cf_{\{ \sqrt{x}N(-x,\bU)\leq x^\epsilon\}} \right] 
+ O(x^{\epsilon -1}) + o(x^{-r}) \\
& \leq -\frac{1}{3} +O(x^{2\epsilon -1}) + o(e^{-\nu x^{\epsilon}})+O(x^{\epsilon -1}) + o(x^{-r})
.\end{split}\]
This, \eqref{d30.1}, \eqref{d30.2} and \eqref{d30.3} imply that $2x\hat \mu_1(x) -\hat \mu_2(x) \leq O(x^{-\delta})$ for every $0<\delta <1$.
\end{proof}

\section{Power function  scatterer density: proofs of the main results}\label{se proofs}

\begin{proof}[Proof of Theorem \ref{theorem power function intro t/r}]

Let $(\mathbf{X}_k)_{k\geq 0}= \{(X_{1,k},\dots,X_{d-1,k},X_{d,k})\}_{k\geq 0}$ be the Markov chain with transition operator \eqref{eq transition} started from $\mathbf{0}$, with gravitation $g$ and scatterer density $h(\mathbf{x})=h(x_d)=c|x_d|^\lambda$, with $c>0$ and $\lambda\geq 0$.
Theorem \ref{theorem Lamperti}, Proposition \ref{prop A-to-eps}, Proposition \ref{proposition asymptotic mean variance}, and Proposition \ref{prop recurrence3d} imply that
 
(i) if $1\leq d\leq 3$ then $(X_{d,k})_{k\geq 0}$ is neighborhood recurrent, and

(ii) if $d\geq 4$ then $(X_{d,k})_{k\geq 0}$ is transient if $\lambda<(d-3)/2$ and neighborhood recurrent if $\lambda>(d-3)/2$.

 Let
$(\mathbf{X}(t), \mathbf{V}(t))_{t\geq 0}$
  be the Markov process with generator \eqref{eq generator d} started from $(\mathbf{0},\mathbf{0})$ with gravitation $g$ and scatterer density $h(\mathbf{x})=h(x_d)=c|x_d|^\lambda$, with $c>0$ and $\lambda\geq 0$ and 
let  $(\mathbf{X}(t))_{t\geq 0}= \{(X_1(t),\dots,X_d(t))\}_{t\geq 0}$.
The process $(\mathbf{X}_k)_{k\geq 0}$ can be constructed as $(\mathbf{X}(t))_{t\geq 0}$ sampled at some random times. Hence, if $(X_{d,k})_{k\geq 0}$ visits an interval $[y, 0]$ infinitely often, so does $(X_d(t))_{t\geq 0}$. In other words, if $(X_{d,k})_{k\geq 0}$ is neighborhood recurrent then $(X_d(t))_{t\geq 0}$ is neighborhood recurrent. Next suppose that $(X_d(t))_{t\geq 0}$ is neighborhood recurrent and fix any $y<0$. It is easy to see that $(X_d(t))_{t\geq 0}$ will visit $(2y,y)$ infinitely often, a.s. The random flight construction shows that there exists $p>0$, depending on $y$, such that if $X_d(0)\in(2y,y)$ then with probability greater than $p$ there will be a scattering event at a location such that  $X_d(t)\in (y,y/2)$ before $X_d$ hits $3y$.  A standard argument based on the strong Markov property then shows that there will be infinitely many scattering events with $X_d(t)\in(y,y/2)$, a.s. It follows that $(X_{d,k})_{k\geq 0}$ is neighborhood recurrent.
We conclude that  $(X_{d,k})_{k\geq 0}$ is neighborhood recurrent if and only if $(X_d(t))_{t\geq 0}$ is neighborhood recurrent. 

It remains to show that $(X_d(t))_{t\geq 0}$ is recurrent only in the case $d=1$. It is easy to see, using continuity of $(X_d(t))_{t\geq 0}$, that neighborhood recurrence implies that all $y<0$ are visited infinitely often, a.s. 
The energy of the particle is preserved forever, so if $(\mathbf{X}(0), \mathbf{V}(0))=(\mathbf{0},\mathbf{0})$ then we may have $X_d(t_1) = 0$ for some $t_1$ only if $\mathbf{V}(t_1)= \mathbf{0}$. But if $d\geq 2$ then after every scattering event, the first coordinate of  $\mathbf{V}$ is a non-zero constant until the next scattering event, a.s. This shows that $X_d(t)\ne 0$ for all $t>0$, a.s.

If $d=1$ and $(X_d(t))_{t\geq 0}$ is neighborhood recurrent then the process will visit some interval $[y,0]$ infinitely often, a.s., and, because of the claim (i) for $(X_{d,k})_{k\geq 0}$, it will scatter within this interval. After the scattering event, it will travel upwards with probability 1/2 and reach 0 with probability $p_1>0$, depending on $y$. A standard argument based on the strong Markov property shows that  $(X_d(t))_{t\geq 0}$ will hit 0 infinitely often, a.s.
\end{proof}

\begin{proof}[Proof of Theorem \ref{theorem intro sk inv}]
This follows  from \ref{theorem Lamperti} and Proposition \ref{proposition asymptotic mean variance}.
\end{proof}

We now turn to the proof of Theorem \ref{theorem intro natural}.  The idea is to augment the result of Theorem \ref{theorem intro sk inv} to include information on the time between reflections and then make a time change argument using the continuity properties of the Skorokhod topology.

To simplify notation, let 
\begin{equation}\label{eq zldef} (Z^\lambda_t, t\geq 0) =_d \left(-\rho_{d'} \left(\frac{2}{dc^2}\left(1+\lambda \right)^2 t\right)^{1/(1+\lambda)}, t\geq 0\right)\end{equation}
where $ d' =  (d+1+2\lambda)/(2+2\lambda)$ and $\left(\rho_{d'} (t), t\geq 0\right)$ is a $d'$-dimensional Bessel process.  Note that $Z^\lambda$ is a Feller process and a straightforward calculation shows that its generator acts on $f\in C^2(-\infty,0)$ with compact support by
\begin{equation}\label{eq lambda gen} \cA^\lambda f(y) = \frac{2}{dc^2} |y|^{-2\lambda} \left[ \frac{1}{2} f''(y) - \left(\frac{d-1-2\lambda}{4|y|} \right) f'(y)\right].\end{equation}

\begin{theorem}\label{de17.5}
Consider the Markov chain $((Y_m,\Delta_m), m \geq 0)$ started from $(0,0)$ with transition operator
\[\wt \oper f(y,z) = \E\left[ f\left(y+U_d\v N(y,\bU) - \frac{g}{2}N(y,\bU)^2, \  N(y,\bU) \right)\right].\]
Fix $\eps >0$ and for $m\in \N$, let $T^{n,\eps}_m = \sum_{j=1}^m \Delta_j\cf_{\{Y_{j-1} \leq -\eps n^{1/(2+2\lambda)}\}} $.  We extend $T^{n,\eps}$ to $\R_+$ by linear interpolation.  We have the joint convergence in distribution
\[ \left( \left( n^{-1/(2+2\lambda)} Y_{[s n]}, n^{-\frac{3+2\lambda}{4+4\lambda}} T^{n,\eps}_{nt} \right) , s,t\geq 0\right) \to_d \left( \left( Z^\lambda_s, \Phi_\eps( Z^\lambda)_t\right) , s,t\geq 0\right)\]
in $D(\R_+,\R)\times D(\R_+,\R)$, where $\Phi_\eps :D(\R_+,\R)\to D(\R_+,\R)$ is defined by
\begin{align}\label{j20.4}
\Phi_\eps(f)_t = \int_0^t \frac{ \cf(f(s) \leq -\eps)}{c \sqrt{2g}\, |f(s)|^{\lambda+1/2}} ds.
\end{align}
\end{theorem}

 \begin{proof}  

Note that the map $\Phi_\eps$ is continuous in the Skorokhod topology at all continuous functions $f$ such that $\Leb(\{s : f(s)=-\eps\})=0$, where $\Leb$ stands for Lebesgue measure.  In particular, it is almost surely continuous at $( Z^\lambda_t, \ t\geq 0)$.  Hence, we conclude from  
Theorem \ref{theorem intro sk inv} that for every $\eps >0$ we have the joint convergence in distribution in $D(\R_+,\R)\times D(\R_+,\R)$,
\begin{align}\label{de17.1}
\left( \left( n^{-1/(2+2\lambda)} Y_{[s n]}, \Phi_\eps \left(n^{-1/(2+2\lambda)} Y_{[t n]} \right) \right) , s,t\geq 0\right) \to_d \left( \left( Z^\lambda_s, \Phi_\eps( Z^\lambda)_t\right) , s,t\geq 0\right).
\end{align}

Let $\cF_m = \sigma( (Y_j, \Delta_j), 0\leq j\leq m)$ and consider the martingale with respect to the filtration $(\cF_m)_{m\geq 0}$ given by
\[ W_m:= \sum_{j=1}^m \left(\Delta_j - \E\left[\Delta_j \mid \cF_{j-1}\right]\right),\ m\geq 0.\]
Define $\phi(y) =\E\left(N(y,\bU)\right)$. By the Markov property we see that $ \E\left[\Delta_j \mid \cF_{j-1}\right] = \phi\left(Y_{j-1}\right)$.

By Lemma \ref{lemma large y bound} we see that $\sup_{y}\phi(y)<\infty$ and 
\[ \xi:=\sup_{m} \E\left[\left(\Delta_m- \E\left[\Delta_m \mid \cF_{m-1}\right]\right)^2\right] <\infty.\]
By Chebyshev's and Doob's maximal inequalities we see that for every $\eps>0$ and integer $k\geq 1$,
\begin{align*}%\label{s9.7}
 \P\left( \sup_{1\leq m \leq kn} \left| W_m \right| > \eps n^{\frac{3+2\lambda}{4+4\lambda}}\right) 
&\leq  \frac{1}{\eps^2 n^{(3+2\lambda)/(2+2\lambda)} } 
\E\left[\left(
\sup_{1\leq m \leq kn} | W_m| \right)^2\right] 
\leq  \frac{4}{\eps^2 n^{(3+2\lambda)/(2+2\lambda)} } \E\left[|W_{kn}|^2\right] \\
&\leq \frac{4k\xi }{\eps^2 n^{1/(2+2\lambda)}},
\end{align*}
from which it follows that $\sup_{1\leq m \leq kn} \left|n^{-\frac{3+2\lambda}{4+4\lambda}} W_m \right| $ converges to $0$ in probability as $n\to\infty$.  Similarly, if for $\eps >0$ we define
\[ W^{n,\eps}_m =   \sum_{j=1}^m \left(\Delta_j - \E\left[\Delta_j \mid \cF_{j-1}\right]\right)\cf_{\{Y_{j-1}\leq -\eps n^{1/(2+2\lambda)}\}}
=   \sum_{j=1}^m \left(\Delta_j - \phi\left(Y_{j-1}\right)\right)\cf_{\{Y_{j-1}\leq -\eps n^{1/(2+2\lambda)}\}}
,\ m\geq 0,\]
we find that  $\sup_{1\leq m \leq kn} \left|n^{-\frac{3+2\lambda}{4+4\lambda}} W^{n,\eps}_m \right| $ converges to $0$ in probability as $n\to\infty$. We record this for future reference as
\begin{align}\label{de17.2}
\sup_{1\leq m \leq kn} \left|n^{-\frac{3+2\lambda}{4+4\lambda}} \sum_{j=1}^m \left(\Delta_j - \phi\left(Y_{j-1}\right)\right)\cf_{\{Y_{j-1}\leq -\eps n^{1/(2+2\lambda)}\}} \right| \to 0,
\end{align}
in probability as $n\to\infty$.

Lemma \ref{lemma moments monomial} implies that for every $\eps >0$,
\begin{align*}
\limsup_{n\to\infty}\ &  n^{\frac{1+2\lambda}{4+4\lambda}} \sup_{y\leq -\eps n^{1/(2+2\lambda)}}\left| \phi(y) - \frac 1{c\sqrt{2g} |y|^{\lambda+1/2}} \right| \\
&=
\limsup_{n\to\infty}  \eps^{-\lambda -1/2}
\inf_{z\leq -\eps n^{1/(2+2\lambda)}} |z|^{\lambda+1/2}
\sup_{y\leq -\eps n^{1/(2+2\lambda)}}\left| \phi(y) - \frac 1{c\sqrt{2g} |y|^{\lambda+1/2}} \right| \\
&\leq
\limsup_{n\to\infty}  \eps^{-\lambda -1/2}
 \frac 1{c\sqrt{2g}}
\sup_{y\leq -\eps n^{1/(2+2\lambda)}}
c\sqrt{2g} |y|^{\lambda+1/2} \left| \phi(y) - \frac 1{c\sqrt{2g} |y|^{\lambda+1/2}} \right| \\
&=
\limsup_{n\to\infty}  \eps^{-\lambda -1/2}
 \frac 1{c\sqrt{2g}}
\sup_{y\leq -\eps n^{1/(2+2\lambda)}}
\left|c\sqrt{2g} |y|^{\lambda+1/2} \phi(y) - 1\right| =0 .
\end{align*}
This implies that for every integer $k\geq 1$, a.s.,
\begin{align}\label{de17.3}
 \limsup_{n\to\infty}& \sup_{1 \leq m \leq kn}  n^{-\frac{3+2\lambda}{4+4\lambda}} \sum_{j=1}^m \left|  \phi\left(Y_{j-1}\right) - \frac{1}{\sqrt{2g\left|Y_{j-1}\right|}\, h\left(Y_{j-1}\right)} \right|\cf_{\{Y_{j-1}\leq -\eps n^{1/(2+2\lambda)}\}}\\
&\leq 
 \limsup_{n\to\infty}  n^{-\frac{3+2\lambda}{4+4\lambda}} k n \sup_{y\leq -\eps n^{1/(2+2\lambda)}}\left| \phi(y) - \frac 1{c\sqrt{2g} |y|^{\lambda+1/2}} \right|=0.\nonumber
\end{align}
Note that,
\[\begin{split} \Phi_\eps \left(n^{-1/(2+2\lambda)} Y_{[\,\cdot\, n]} \right)_{m/n} & =  \frac{1}{n} \sum_{j=1}^m\frac{\cf_{\{n^{-1/(2+2\lambda)}Y_{j-1}\leq -\eps \}}}{\sqrt{2g\left|n^{-1/(2+2\lambda)}Y_{j-1}\right|}\, h\left(n^{-1/(2+2\lambda)}Y_{j-1}\right)} \\
& =n^{-\frac{3+2\lambda}{4+4\lambda}} \sum_{j=1}^m\frac{\cf_{\{Y_{j-1}\leq -\eps n^{1/(2+2\lambda)}\}}}{\sqrt{2g\left|Y_{j-1}\right|}\, h\left(Y_{j-1}\right)}
.\end{split}\]
Hence, 
\begin{align*}
&\sup_{1 \leq m \leq kn} \left| \Phi_\eps \left(n^{-1/(2+2\lambda)} Y_{[\,\cdot\, n]} \right)_{m/n} - n^{-\frac{3+2\lambda}{4+4\lambda}} T^{n,\eps}_{m} \right|\\
&= 
\sup_{1 \leq m \leq kn} \left| \Phi_\eps \left(n^{-1/(2+2\lambda)} Y_{[\,\cdot\, n]} \right)_{m/n} -  n^{-\frac{3+2\lambda}{4+4\lambda}} \sum_{j=1}^{m}\cf_{\{Y_{j-1}\leq -\eps n^{1/(2+2\lambda)}\}}\Delta_{j} \right|\\
&= \sup_{1 \leq m \leq kn} \left| n^{-\frac{3+2\lambda}{4+4\lambda}} \sum_{j=1}^m\frac{\cf_{\{Y_{j-1}\leq -\eps n^{1/(2+2\lambda)}\}}}{\sqrt{2g\left|Y_{j-1}\right|}\, h\left(Y_{j-1}\right)} -  n^{-\frac{3+2\lambda}{4+4\lambda}} \sum_{j=1}^{m}\cf_{\{Y_{j-1}\leq -\eps n^{1/(2+2\lambda)}\}}\Delta_{j} \right|\\
&\leq
\sup_{1\leq m \leq kn} \left|n^{-\frac{3+2\lambda}{4+4\lambda}} \sum_{j=1}^m \left(\Delta_j - \phi\left(Y_{j-1}\right)\right)\cf_{\{Y_{j-1}\leq -\eps n^{1/(2+2\lambda)}\}} \right|\\
& \quad +
\sup_{1 \leq m \leq kn}  n^{-\frac{3+2\lambda}{4+4\lambda}} \sum_{j=1}^m \left|  \phi\left(Y_{j-1}\right) - \frac{1}{\sqrt{2g\left|Y_{j-1}\right|}\, h\left(Y_{j-1}\right)} \right|\cf_{\{Y_{j-1}\leq -\eps n^{1/(2+2\lambda)}\}}.
\end{align*}
This, \eqref{de17.2} and \eqref{de17.3} imply that for fixed $\eps>0$ and $k$,
\begin{align*}
&\sup_{1 \leq m \leq kn} \left| \Phi_\eps \left(n^{-1/(2+2\lambda)} Y_{[\,\cdot\, n]} \right)_{m/n} -  n^{-\frac{3+2\lambda}{4+4\lambda}} T^{n,\eps}_{m} \right|
\to 0,
\end{align*}
in probability, as $n\to \infty$.
It follows from this and \eqref{de17.1} that for every $\eps >0$ we have the joint convergence in distribution
\[ \left( \left( n^{-1/(2+2\lambda)} Y_{[s n]}, n^{-\frac{3+2\lambda}{4+4\lambda}} T^{n,\eps}_{nt}  \right) , s,t\geq 0\right) \to_d \left( \left( Z^\lambda_s, \Phi_\eps( Z^\lambda)_t\right) , s,t\geq 0\right)\]
in $D(\R_+,\R)\times D(\R_+,\R)$, and the result follows.  
\end{proof}

\begin{remark}
We conjecture that the convergence in Theorem \ref{de17.5} can be extended to include the case $\eps =0$.  One reason to believe this is that the limiting process is still well defined.  From the basic properties of Bessel processes it follows that for every fixed $t^*\geq 0$ we have $\lim_{\eps \to 0} m(\{s \leq t^* :  Z^\lambda_s \geq-\eps\})=0$ almost surely.  Consequently, we have that
\begin{align}\label{j20.3}
\lim_{\eps\to 0} \Phi_\eps( Z^\lambda) = \left(\int_0^t \frac{ 1}{c \sqrt{2g} | Z^\lambda_s|^{\lambda+1/2}} ds, t\geq 0 \right) \equiv \Phi( Z^\lambda), \quad \text{a.s.}
\end{align}
A standard occupation density computation for Bessel processes shows that $\Phi( Z^\lambda)_t< \infty$ a.s., for every $t\geq 0$.  The problem comes in controlling the amount of time spent between collisions when $Y$ is near $0$, which contribute constant order time.  We note that the same difficulty arises in the periodic Galton Board model, studied in \cite{CD09}, where the authors avoided this complication by assuming the particle had a sufficiently large initial velocity and was reflected down at the corresponding level.  In \cite{RaviT} the authors considered a model similar to ours when $h\equiv 1$ and, in that setting, were able to overcome this difficulty through different methods.
\end{remark}

Theorem \ref{de17.5} allows us to obtain a scaling limit for the continuous time particle path (away from $0$). In addition to keeping track of time we need to keep track of the direction of reflection. That is, we consider the Markov chain $((Y_m,\Delta_m,\bU^m), m\geq 0)$ with transition operator
\[ \wh \oper f(y,z,w) = \E\left[ f\left(y+U_d\v N(y,\bU) - \frac{g}{2}N(y,\bU)^2, \  N(y,\bU), \bU \right)\right],\]
started from $(0,0,(0,\dots,0,-1))$. 
Let $T_m = \sum_{j=0}^m \Delta_j $. The $d$-th component of the path of the particle is then given by
\begin{equation} \label{eq particle2}
Y(t) = Y_{m-1}+U^m_d\sqrt{2g|Y_{m-1}|}(t-T_{m-1}) - \frac{g}{2}(t-T_{m-1})^2 \quad \textrm{on} \quad T_{m-1} \leq t < T_m, m\geq 1. 
\end{equation}

The following lemma is likely to be known but we could not find a reference. 

Let $\R^* = \R \cup \{\infty\}$ and $\R^*_+ = \R_+ \cup \{\infty\}$. By convention, $\inf \emptyset = \infty$ and for any function $f$, $f(\infty) = \infty$.
For $f\in D(\R_+,\R_+)$, define $\Psi:D(\R_+,\R_+) \to D(\R_+,\R^*_+)$ by $\Psi(f)(t) = \inf\left\{s : f(s)>t\right\}$.

\begin{lemma}\label{lemma e-inverse}
If $f \in D(\R_+,\R_+)$ is continuous and strictly increasing with $\lim_{t\to\infty} f(t)=\infty$, then $\Psi(f) \in D(\R_+,\R_+)$ and $\Psi$ is continuous at $f$.
\end{lemma}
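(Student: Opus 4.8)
The plan is to first identify $\Psi(f)$ explicitly and then prove continuity at $f$ by reducing Skorokhod convergence to local uniform convergence and exploiting the strict monotonicity of $f$. Since $f$ is continuous and strictly increasing with $\lim_{t\to\infty}f(t)=\infty$, it is a homeomorphism of $[0,\infty)$ onto $[f(0),\infty)$. For $t\geq f(0)$ there is a unique $s^*$ with $f(s^*)=t$ and $\{s:f(s)>t\}=(s^*,\infty)$, so $\Psi(f)(t)=s^*=f^{-1}(t)$; for $0\leq t<f(0)$ one has $f(s)\geq f(0)>t$ for all $s\geq 0$, so $\Psi(f)(t)=0$. Hence $\Psi(f)$ is the continuous, nondecreasing, finite-valued function equal to $0$ on $[0,f(0)]$ and to $f^{-1}$ on $[f(0),\infty)$. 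In particular $\Psi(f)\in D(\R_+,\R_+)$ and is continuous, and because $\lim_{t\to\infty}f(t)=\infty$ the value $\infty$ is never attained; this settles the first assertion.

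For continuity, suppose $f_n\to f$ in $D(\R_+,\R_+)$. Since the limit $f$ is continuous, $J_1$ convergence is equivalent to local uniform convergence, so $\eps_n(T):=\sup_{0\leq s\leq T}|f_n(s)-f(s)|\to 0$ for every $T$. Conversely, since $\Psi(f)$ is continuous, it suffices to prove that $\Psi(f_n)\to\Psi(f)$ uniformly on every compact interval, as this implies $J_1$ convergence in $D(\R_+,\R^*_+)$. Fix $T>0$ and set $S:=\Psi(f)(T)$. The key quantitative input is the uniform strict monotonicity of $f$ on $[0,S+1]$: for $\delta\in(0,1)$ the modulus $\omega(\delta):=\inf\{f(s+\delta)-f(s):0\leq s\leq s+\delta\leq S+1\}$ is strictly positive.

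Given $\delta$, I would show that once $n$ is large enough that $\eps_n(S+1)<\omega(\delta)/2$, one has $|\Psi(f_n)(t)-\Psi(f)(t)|\leq\delta$ for all $t\in[0,T]$, writing $s^*=\Psi(f)(t)$. For the upper bound, evaluating at $s^*+\delta$ gives $f_n(s^*+\delta)\geq f(s^*+\delta)-\eps_n\geq f(s^*)+\omega(\delta)-\eps_n>t$, so $\Psi(f_n)(t)\leq s^*+\delta$. For the lower bound, for every $s\in[0,s^*-\delta]$ one has $f_n(s)\leq f(s)+\eps_n\leq f(s^*-\delta)+\eps_n\leq t-\omega(\delta)+\eps_n<t$, whence no point of $[0,s^*-\delta]$ lies in $\{s:f_n(s)>t\}$ and therefore $\Psi(f_n)(t)\geq s^*-\delta$ (and in particular $\Psi(f_n)(t)$ is finite). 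Letting $\delta\to 0$ and then $T\to\infty$ yields the claimed local uniform convergence.

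The main obstacle is precisely the lower bound: the functions $f_n$ are \emph{not} assumed monotone, so $\Psi(f_n)$ is the generalized inverse of a genuinely non-monotone c\`adl\`ag function, and controlling it from below requires $f_n$ to stay below the level $t$ on the \emph{entire} interval $[0,s^*-\delta]$ rather than merely at its right endpoint. This is exactly what uniform closeness of $f_n$ to $f$ on the whole compact $[0,S+1]$, combined with the strict-monotonicity modulus $\omega$, delivers. The only remaining care is the boundary region $t<f(0)$ (where $s^*=0$, so only the upper bound is needed, obtained from $f(\delta)\geq f(0)+\omega(\delta)>t+\omega(\delta)$) and the two standard equivalences between $J_1$ and uniform convergence, both of which are legitimate here because $f$ and $\Psi(f)$ are continuous.
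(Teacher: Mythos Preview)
Your argument is correct. Both you and the paper reduce to locally uniform convergence $f_n\to f$ (legitimate since $f$ is continuous) and then exploit strict monotonicity of $f$, but the executions differ. The paper argues by contradiction: it extracts a subsequence and points $t_{n_k}\to t_\infty$ with $|\Psi(f_{n_k})(t_{n_k})-\Psi(f)(t_{n_k})|>\eps$, sets $s_\infty=\Psi(f)(t_\infty)$, and uses uniform closeness of $f_{n_k}$ to $f$ on a neighborhood of $s_\infty$ to force $\Psi(f_{n_k})(t_{n_k})$ into $[s_\infty-\eps/2,s_\infty+\eps/2]$. You instead give a direct estimate via the monotonicity modulus $\omega(\delta)=\inf\{f(s+\delta)-f(s):s+\delta\le S+1\}$, obtaining the two-sided bound $|\Psi(f_n)(t)-\Psi(f)(t)|\le\delta$ uniformly on $[0,T]$ once $\eps_n(S+1)<\omega(\delta)/2$. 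Your route is a bit cleaner and yields an explicit rate; the paper's route is marginally more pointwise but ends in the same place.

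One small omission: the paper first proves that $\Psi(h)\in D(\R_+,\R^*_+)$ for \emph{every} $h\in D(\R_+,\R_+)$, which is what makes $\Psi$ a well-defined map into a Skorokhod space and gives meaning to ``$\Psi$ is continuous at $f$''. You verify this only for the limit $f$. Your uniform-convergence argument is unaffected, but to be complete you should add one line: $\Psi(h)$ is nondecreasing (hence has one-sided limits), and right-continuity follows because for $b=\Psi(h)(t)$ and any $\delta>0$ there is $b_1\in(b,b+\delta)$ with $h(b_1)>t$, so $\Psi(h)(s)\le b_1$ for all $s\in(t,h(b_1))$.
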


\begin{proof}
First we prove that for any $h\in D(\R_+,\R_+)$, the function $\Psi(h)$ is in $D(\R_+,\R^*_+)$.  It is clear that $\Psi(h)$
is a non-decreasing function. 
Since the function $\Psi(h)$ is monotone, it has left and right limits at every point. It remains to show that it is right-continuous. Since $\Psi(h)$ is non-decreasing, we have $\lim_{s\downarrow t}  \Psi(h)(s) \geq \Psi(h)(t)$ for every $t$.
Consider any $t $ and an arbitrarily small $\delta>0$, and let $b = \Psi(h)(t)$. 
If $h(b) \leq t$ then there must exist $b_1\in(b,b+\delta)$ and $t_1 >t$ such that $h(b_1) = t_1$. This claim holds also in the case $h(b) > t$, by the right-continuity of $h$. For all $s\in(t, t_1)$ we have 
$\Psi(h)(s) \leq b_1 < b+ \delta$. Since $\delta>0$ is arbitrarily small, this implies that $\lim_{s\downarrow t}  \Psi(f)(s) \leq \Psi(h)(t)$. In view of the previously proved opposite inequality, we conclude that $\Psi(h)$ is right continuous at $t$. This completes the proof that $\Psi(h) \in D(\R_+,\R^*_+)$.

Now suppose that $f$ satisfies the hypotheses of the lemma and that $f_n \in D(\R_+,\R_+)$ is a sequence converging to $f$. Since $f$ is continuous and strictly increasing, the function $\Psi(f)$ is also continuous and strictly increasing.
Fix any $T < \infty$. It suffices to show that
\begin{align*}
\lim_{n\to\infty} \sup_{t\in[0,T]}  |\Psi(f_n)(t) -\Psi(f)(t)| =0.
\end{align*}
Suppose otherwise. Then there exist $\eps >0$, a subsequence $n_k$ and a sequence $t_{n_k}$ of points in $[0,T]$, such that $|\Psi(f_{n_k})(t_{n_k}) -\Psi(f)(t_{n_k})| > \eps$ for all $k$. By compactness, we may suppose that $t_{n_k} \to t_\infty \in [0,T]$ as $k\to \infty$. 
We will assume that $t_\infty \in(0,T)$. The argument requires only small modifications when $t_\infty$ is 0 or $T$. 

Let $s_\infty = \Psi(f)(t_\infty)$ and
\begin{align*}
\delta =  \min(f(s_\infty- \eps/4) - f(s_\infty- \eps/2), f(s_\infty+ \eps/2) - f(s_\infty+ \eps/4)).
\end{align*}

Since $f(s_\infty) = t_\infty$, $f$ is strictly increasing and $t_{n_k} \to t_\infty$, there exists $k_1$ such that for all $k\geq k_1$,
\begin{align}\label{de15.2}
f(s_\infty- \eps/4) \leq t_{n_k} \leq
f(s_\infty+ \eps/4).
\end{align}

Since $f$ is continuous, $f_n\to f$ uniformly on compact sets. Let $k_2\geq k_1$ be so large that for $k\geq k_2$.
\begin{align}\label{de15.3}
|\Psi(f)(t_\infty) - \Psi(f)(t_{n_k})|  &< \eps/4,\\
 \sup_{t\in[0, s_\infty - \eps/2]} |f_{n_k}(t) -f(t)| &< \delta/4,\label{de15.4}\\
 \sup_{t\in[ s_\infty + \eps/2,T]} |f_{n_k}(t) -f(t)| &< \delta/4.\nonumber
\end{align}
It follows from the definition of $\delta$ and \eqref{de15.4}  that
\begin{align*}
 \sup_{t\in[0, s_\infty - \eps/2]} f_{n_k}(t)  &< f(s_\infty- \eps/4).
\end{align*}
This, \eqref{de15.2}, the definition of $s_\infty$ and \eqref{de15.3} imply that
\begin{align}\label{de15.5}
\Psi(f_{n_k})(t_{n_k}) \geq
\Psi(f_{n_k})(f(s_\infty- \eps/4)) \geq s_\infty- \eps/2
= \Psi(f)(t_\infty) - \eps/2 \geq
\Psi(f)(t_{n_k}) - 3\eps/4.
\end{align}
The following estimates can be obtained in an analogous way,
\begin{align*}
\Psi(f_{n_k})(t_{n_k}) \leq
\Psi(f_{n_k})(f(s_\infty+ \eps/4)) \leq s_\infty+ \eps/2
= \Psi(f)(t_\infty) + \eps/2 \leq
\Psi(f)(t_{n_k}) + 3\eps/4.
\end{align*}
We combine this with \eqref{de15.5} to obtain $|\Psi(f_{n_k})(t_{n_k}) -\Psi(f)(t_{n_k})| \leq 3\eps/4$. This contradicts the definition of the sequence $t_{n_k}$. This contradiction completes the proof.
\end{proof}

In order to apply this lemma, we need the following proposition.
Let $\tau_{v+} = \inf \{t: Z^\lambda(t) > v\}$.

\begin{proposition}
For all $y<v<0$,
\[ \P_y\left(\lim_{t\to\infty}   \int_0^t \frac{1}{c\sqrt{2g} \,|Z^\lambda(s \wedge \tau_{v+})|^{1/2+\lambda}} ds = \infty\right) =1.\]
\end{proposition}

\begin{proof}
The result is trivial on the set where $(Z^\lambda(t \wedge \tau_{v+}), t\geq 0)$ is absorbed at $v$.  It follows from \eqref{eq lambda gen} that the scale function $G$ and speed measure $m$ for $Z^\lambda$ are given by
\[ G(y) = \int_{-1}^y |u|^{\lambda - (d-1)/2 } du 
\quad \textrm{and}\quad  m(dy) = 
\frac{dc^2}2 |y|^{\lambda+(d-1)/2} dy.\] 
If $G(-\infty)=-\infty$, then  $(Z^\lambda(t \wedge \tau_{v+}), t\geq 0)$ is absorbed at $v$ with probability $1$, so we may assume that $G(-\infty)$ is finite.  Note that this implies that $d>3$.  In this case there exists $C>0$ such that for all $y<-1$
\[ (G(y) - G(-\infty)) \left(\frac{1}{c\sqrt{2g}\, |y|^{1/2+\lambda}}\right) \frac{dm}{dy}(y) \geq C\sqrt{|y|}.\]
Since $\int_{-\infty}^y \sqrt{|u|}du = \infty$ for all $y\in \R$, the result is an application of \cite[Theorem 2.11]{MU12}. 
\end{proof}

\begin{theorem}\label{theorem monomial path convergence}
Fix $y<v <0$ and define $\tau^n_{y-} = \inf\{m : Y_m\leq n^{1/(2+2\lambda)}y\}$ and $\tau^n_{v+} = \inf\{m > \tau^n_{y-} : Y_m\geq n^{1/(2+2\lambda)}v\}$.  For $(Y(t),t\geq 0)$ as defined in \eqref{eq particle2} and $y<v<0$ we have the following convergence in distribution on $D(\R_+,\R)$,
\[ \left(n^{-\frac{1}{2+2\lambda}}Y\left(\left(n^{ \frac{3+2\lambda}{4+4\lambda}}t+T_{\tau^n_{y-}} \right)\wedge  T_{\tau^n_{v+}}\right),\ t \geq 0\right) \rightarrow ( Z^\lambda (A(t)\wedge \tau_{v+}), \ t\geq 0),\]
where $ Z^\lambda$ is the diffusion \eqref{eq zldef} started from $y$ and 
\[ A(t) = \Psi\left( \Phi\left( Z^\lambda(\,\cdot\, \wedge \tau_{v+})\right)\right).\]
\end{theorem}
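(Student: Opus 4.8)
The plan is to reproduce, in the monomial regime, the chain of reductions carried out in Section~\ref{section natural scale}, namely Theorem~\ref{theorem joint time convergence} $\to$ Theorem~\ref{theorem skeletal natural time} $\to$ Theorem~\ref{theorem full path convergence}, but feeding in the present section's inputs: Theorem~\ref{de17.5} plays the role of Theorem~\ref{theorem joint time convergence}, the map $\Psi$ of Lemma~\ref{lemma e-inverse} inverts the time change, and Lemma~\ref{lemma large y bound} supplies the moment control needed to fill in the path between reflections. The two stopping times $\tau^n_{y-}$ and $\tau^n_{v+}$ localize the walk in a region where $n^{-1/(2+2\lambda)}Y$ stays in $(-\infty,v]$, hence bounded away from $0$ in rescaled coordinates; this is exactly what allows me to replace the $\eps$-truncated clock $T^{n,\eps}$ of Theorem~\ref{de17.5} by the genuine clock $T$ and to apply the moment estimates uniformly.

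First I would assemble the joint skeleton-and-time convergence. Theorem~\ref{de17.5}, applied to the chain started at $0$, gives, for each fixed $\eps\in(0,|v|)$, the joint convergence of $(n^{-1/(2+2\lambda)}Y_{[s n]},\,n^{-(3+2\lambda)/(4+4\lambda)}T^{n,\eps}_{nt})$ to $(\tilde Z^\lambda_s,\,\Phi_\eps(\tilde Z^\lambda)_t)$ with $\tilde Z^\lambda$ started from $0$. The objects in the statement---the skeleton shifted to origin $\tau^n_{y-}$ and stopped at $\tau^n_{v+}$, together with its shifted real clock---are continuous functionals (first passage below $y$, shift of time origin, stop at first passage above $v$) of this joint limit, and since $\tilde Z^\lambda$ crosses the levels $y$ and $v$ transversally these functionals are a.s.\ continuity points, so the continuous mapping theorem transfers the convergence; the strong Markov property of the limit diffusion identifies the post-passage piece as $\tilde Z^\lambda$ started from $y$. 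Throughout the window $[\tau^n_{y-},\tau^n_{v+}]$ every reflection height satisfies $Y_j<n^{1/(2+2\lambda)}v\le-\eps n^{1/(2+2\lambda)}$, so the increments of $T^{n,\eps}$ over the window coincide with those of $T$; and because $\tilde Z^\lambda(\,\cdot\,\wedge\tau_{v+})\le v$, one has $\Phi_\eps(\tilde Z^\lambda(\,\cdot\,\wedge\tau_{v+}))=\Phi(\tilde Z^\lambda(\,\cdot\,\wedge\tau_{v+}))$ for such $\eps$. Thus the truncation disappears in the limit and I obtain joint convergence of the localized skeleton and its rescaled real clock to $(\tilde Z^\lambda(\,\cdot\,\wedge\tau_{v+}),\,\Phi(\tilde Z^\lambda(\,\cdot\,\wedge\tau_{v+})))$ with $\tilde Z^\lambda$ from $y$.

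Next, exactly as in Theorem~\ref{theorem skeletal natural time}, since $\Phi(\tilde Z^\lambda(\,\cdot\,\wedge\tau_{v+}))$ is continuous, strictly increasing (its integrand is bounded below on the localized region) and unbounded, Lemma~\ref{lemma e-inverse} applies, and composing with the continuous map $\Psi$ yields $A_n\to A=\Psi(\Phi(\tilde Z^\lambda(\,\cdot\,\wedge\tau_{v+})))$ jointly with the skeleton; the composition-continuity result of \cite[Section~17]{B68} then gives convergence of the localized skeleton watched on the real time scale to $\tilde Z^\lambda(A(t)\wedge\tau_{v+})$. Finally I would fill in the path following \eqref{j20.2}--\eqref{j20.7}: over a bounded number of reflection intervals the deviation of $n^{-1/(2+2\lambda)}Y(\,\cdot\,)$ from the corresponding skeleton value is at most $n^{-1/(2+2\lambda)}\sup_m(\sqrt{2g|Y_{m-1}|}\,\Delta_m+\tfrac{g}{2}\Delta_m^2)$, and on the localized event $\{|Y_{m-1}|\le M n^{1/(2+2\lambda)}\}$ a Markov-inequality bound summed over the order $n$ intervals, controlled by the high moments of $N$ from Lemma~\ref{lemma large y bound}, forces this supremum to $0$ in probability; combined with the continuity of both limit processes this upgrades the convergence to the full continuous path in $D(\R_+,\R)$.

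The step requiring the most care, and the one I expect to be the main obstacle, is the first: producing $\tilde Z^\lambda$ started at $y$ rather than at $0$. Theorem~\ref{theorem skeletal convergence monomial} rests on Lamperti's invariance principle, whose scaling limit is pinned at $0$, so the nontrivial starting point must be manufactured through the first-passage/shift functional applied at $\tau^n_{y-}$. Making this rigorous means verifying that $\tau^n_{y-}$ and $\tau^n_{v+}$ are a.s.\ continuity points of the relevant functionals of the rescaled path---that the limit crosses $y$ and $v$ without lingering and that the overshoot at $\tau^n_{y-}$ is asymptotically negligible---so that the localized skeleton and its accumulated clock converge \emph{jointly}. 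Once this localized joint convergence is established, the time inversion and path-filling steps are routine adaptations of the arguments in Section~\ref{section natural scale}.
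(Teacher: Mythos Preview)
Your proposal is correct and follows essentially the same route as the paper's own argument: invoke Theorem~\ref{de17.5} for the joint skeleton--clock convergence, observe that on the window $[\tau^n_{y-},\tau^n_{v+}]$ the truncated clock $T^{n,\eps}$ coincides with $T$ (and $\Phi_\eps=\Phi$ on the stopped limit), apply Lemma~\ref{lemma e-inverse} and composition continuity as in Theorem~\ref{theorem skeletal natural time}, and finish with the path-filling estimate via Lemma~\ref{lemma large y bound} as in Theorem~\ref{theorem full path convergence}. The paper presents this as a sketch and does not spell out the passage from $\tilde Z^\lambda$ started at $0$ to $\tilde Z^\lambda$ started at $y$; your explicit flagging of the first-passage/shift continuity and overshoot issues at $\tau^n_{y-}$ is a point the paper leaves implicit, so you are being more careful there, not less.
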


\begin{remark}\label{j20.9}
The theorem remains true replacing $A$ with $\Psi\left( \Phi_\eps( Z^\lambda(\,\cdot\, \wedge \tau_{v+}))\right)$ for any $0<\eps<|v|$, where $\Phi_\eps$ is defined in \eqref{j20.4}.  
\end{remark}

\begin{proof}[Proof of Theorem \ref{theorem monomial path convergence}]
Fix $0<\eps<|v|$ and define $\lambda' = (3+2\lambda)/(4+4\lambda)$.  Recall
$T^{n,\eps}_m$ from Theorem \ref{de17.5} and let
\[\begin{split} A_n(t)  & =\Psi\left(n^{-\lambda'} \left(T_{(\tau^n_{y-}+n
\,\cdot\,)\wedge  \tau^n_{v+}}-T_{ \tau^n_{y-}}\right)+
\left(\,\cdot\,-n^{-1}\left(\tau^n_{v+}-\tau^n_{y-}\right)\right)^+ 
\frac{1}{\sqrt{2g| v|} \, h( v)} \right)(t) \\
&=  \Psi\left(n^{-\lambda'} \left(T^{n,\eps}_{(\tau^n_{y-}+n\,\cdot\,)\wedge  \tau^n_{v+}}-T^{n,\eps}_{ \tau^n_{y-}}\right)+
\left(\,\cdot\,-n^{-1}\left(\tau^n_{v+}-\tau^n_{y-}\right)\right)^+ 
\frac{1}{\sqrt{2g| v|} \, h( v)} \right)(t) 
.\end{split}\]
Using Theorem \ref{de17.5}, Lemma \ref{lemma e-inverse}, and the Skorokhod-continuity of composition with a continuous function (see e.g. \cite[Section 17]{B68}), we have that
\[ \left(n^{-\frac{1}{2+2\lambda}}Y_{[nA_n(t)]\wedge  (\tau^n_{v+}-\tau^n_{y-})},\ t \geq 0\right) \rightarrow ( Z^\lambda(A(t)\wedge \tau_{v+}), \ t\geq 0).\]

Observe that for all $0 \leq m\leq \tau^n_{v+}-\tau^n_{y-}$ we have $nA_n\left(n^{-\lambda'}\left(T^{n,\eps}_{\tau^n_{y-}+m}-T^{n,\eps}_{\tau^n_{y-}}\right)\right)=m$ 
and, as a result, if $T^{n,\eps}_{\tau^n_{y-}+m-1}-T^{n,\eps}_{\tau^n_{y-}}\leq n^{\lambda'}t < T^{n,\eps}_{\tau^n_{y-}+m}-T^{n,\eps}_{\tau^n_{y-}}$ then $m-1\leq nA_n(t)< m$.  

Define $\hat T^n_m = T_{\tau^n_{y+}+m}$, fix $S>0$ and observe that
\begin{align}
& \sup_{0\leq t \leq S} 
n^{-\frac{1}{2+2\lambda}}
\left| Y\left(\left(n^{ \lambda'}t+T_{\tau^n_{y-}} \right)\wedge  T_{\tau^n_{v+}}\right) -  Y_{[nA_n(t)]\wedge(\tau^n_{v+}-\tau^n_{y-})}\right| \label{j20.6} \\
& \quad\leq \sup_{m\leq \left\lceil nA_n\left(S\wedge  n^{-\lambda'}(T_{\tau^n_{v+}}-T_{\tau^n_{y-}})\right)\right\rceil} \sup_{\hat T^n_{m-1}\leq t \leq \hat T^n_{m}}\nonumber \\
&\qquad\qquad\left\{
\left|U^{\tau^n_{y+} +m-1}_d\sqrt{\frac{2g|Y_{\tau^n_{y-}+m-1}|}{n^{1/(2+2\lambda)}}}n^{-\frac{1}{4+4\lambda}}(t-\hat T^n_{m-1}) - \frac{g}{2n^{1/(2+2\lambda)}}(t-\hat T^n_{m-1})^2 \right| \right\}\nonumber \\
&\quad\leq \sup_{m\leq \left\lceil nA_n\left(S\wedge  n^{-\lambda'}(T_{\tau^n_{v+}}-T_{\tau^n_{y-}})\right)\right\rceil} \sqrt{\frac{2g|Y_{\tau^n_{y-}+m-1}|}{n^{1/(2+2\lambda)}}}n^{-\frac{1}{4+4\lambda}}(\hat T^n_{m}- \hat T^n_{m-1}) 
\nonumber \\
 &\qquad + \sup_{m\leq \left\lceil nA_n\left(S\wedge  n^{-\lambda'}(T_{\tau^n_{v+}}-T_{\tau^n_{y-}})\right)\right\rceil}\frac{g}{2n^{1/(2+2\lambda)}}(\hat T^n_{m}- \hat T^n_{m-1})^2 . \nonumber
\end{align}

Since $Z^\lambda$ almost surely fluctuates across levels, the convergence in Theorem \ref{de17.5} occurs jointly with the hitting time of $v$, so that
\begin{equation} \label{j20.1} 
\left((n^{-\frac{1}{2+2\lambda}} Y_{[nA_n(t)]\wedge(\tau^n_{v+}-\tau^n_{y-})},\ A_n(t),\ n^{-1}(\tau^n_{v+}-\tau^n_{y-})) , t\geq 0\right) 
\overset{d}{\longrightarrow} (( Z^\lambda(A(t)\wedge \tau_{v+}), A(t), \tau_{v+}), t\geq 0).
\end{equation}
Let
\begin{align*}
B_n = \Bigg\{ &
 \sup_{m\leq \left\lceil nA_n\left(S\wedge  n^{-\lambda'}(T_{\tau^n_{v+}}-T_{\tau^n_{y-}})\right)\right\rceil} \frac{1}{n^{2+2\lambda}}|Y_{\tau^n_{y-}+m}| \leq M ,\quad A_n\left(S\wedge  n^{-\lambda'}(T_{\tau^n_{v+}}-T_{\tau^n_{y-}})\right) \leq M, \\ 
&\quad Y_{\tau^n_{v+}} <n^{1/(2+2\lambda)}( v+\delta) \Bigg\}.
\end{align*}
It follows from \eqref{j20.1} that for every $p_1<1$
there exist $M>0$ and $0<\delta < |v|$ such that for large $n$, $\P(B_n) > p_1$.
We use \eqref{j20.6} to conclude that for $\eps \in(0,1)$ there exist $C_1, C_2>0$ such that 
\begin{multline*} \P\left( \sup_{0\leq t \leq S} \left| Y\left(\left(n^{ \lambda'}t+T_{\tau^n_{y-}} \right)\wedge  T_{\tau^n_{v+}}\right) -  Y_{[nA_n(t)]\wedge(\tau^n_{v+}-\tau^n_{y-})}\right| > \eps, B_n\right)\\
 \leq C_1n\sup_{y\leq n^{1/(2+2\lambda)}( v+\delta), \bU \in \bS^{d-1}}\P(N(y,\bU) > C_2\eps). \end{multline*}
The right hand side goes to $0$ by Lemma \ref{lemma large y bound}, 
applied with a large enough value of $p$, and
using Markov's inequality.  Since $\P(B_n) \to 1$, it follows from \eqref{j20.1} that 
\[ \left(\frac{1}{n^{\frac{1}{2+2\lambda}}}Y\left(\left(n^{ \frac{3+2\lambda}{4+4\lambda}}t+T_{\tau^n_{y-}} \right)\wedge  T_{\tau^n_{v+}}\right),\ t \geq 0\right) \rightarrow ( Z^\lambda (A(t)\wedge \tau_{v+}), \ t\geq 0).\qedhere\]

\begin{proof}[Proof of Theorem \ref{theorem intro natural}]
This result is a consequence of Theorem \ref{theorem monomial path convergence} and a straightforward generator computation.
\end{proof}

\section{More general densities: Theorem \ref{THM GENERAL DENSITY}}\label{highdim}
The analysis in this case is similar to the analysis is Section \ref{highdimpower}.  Instead of appealing to Lamperti's results we appeal to classical results on convergence for martingale problems, in particular we use \cite[Theorem IX.4.21]{JS03}.  We will provide detail where needed, but in many cases we will state a result and leave it to the reader to make the necessary modifications to the corresponding proofs in Section \ref{highdimpower}.  In all such cases the modification is completely routine.  We also refer the reader to \cite{HRW15}, which establishes similar results in a more general setting.

In the setting of Theorem \ref{THM GENERAL DENSITY}, the transition operator for the Markov chain $(Y^n_{m}, m\geq 0)$ representing $x_d$-coordinates of the particle at reflection times is given by
\begin{align}\label{j14.1}
\oper^nf(y) = \E \left[ f\left(y+ U_d\sqrt{2g|y|}n^{-1/4}N^n(y, \bU) - \frac{g}{2\sqrt{n}}N^n(y, \bU) ^2\right)\right]
\end{align}
where $\bU$ is uniform on $\bS^{d-1}$ and conditionally given that $\bU=\bu$,
\begin{align}\label{de16.5}
\P(N^n(y,\bu) >t)  = \exp\Bigg[-\int_0^t &\sqrt{n} h\left(y+u_d\v n^{-1/4}s - \frac{g}{2\sqrt{n}}s^2\right)\\
 &\times \sqrt{\vs n^{-1/2}(1-u_d^2)+\left(\v u_dn^{-1/4}-\frac{g}{\sqrt{n}}s\right)^2}ds\Bigg]. \nonumber
\end{align}
To ease our notation, define
\[\alpha_n(y,\bu) = u_d\v n^{-1/4}N^n(y,\bu) - \frac{g}{2\sqrt{n}}N^n(y,\bu)^2,\]
so that $\oper^nf(y) = \E f\left(y+\alpha_n(y,\bU)\right)$.

As before, we first establish a result for the process observed at reflection times.

\begin{theorem}\label{theorem skeletal convergence d}
Consider any $-\infty<y< v <0$ and suppose that $(Y^n_m,m\geq 0)$ is the Markov chain with transition operator $\oper^n$ started from $y$. 
When $n\to \infty$, the processes $(Y^n_{[nt] \wedge \tau^n_{v+}},t \geq 0)$
converge in distribution on the Skorokhod space to a diffusion whose generator extends the operator that acts on $f\in C^2(-\infty,0)$ with compact support by  
\begin{align}\label{ja24.1}
\mathcal{\bar A}_{h,d}f(y) = \frac{1}{d h(y)^2}f''(y) - \frac{1}{dh(y)^2}\left(\frac{d-1}{2|y|} + \frac{h'(y)}{h(y)}\right)f'(y),
\end{align} 
starting at $y$ and stopped at the hitting time of $v$.
\end{theorem}

The proof of the theorem will be preceded by a number of lemmas.

\begin{proposition}\label{proposition d dimensional limit process}
Suppose that $(Z_t,t\geq 0)$ is a Feller diffusion on $(-\infty, 0)$ whose generator extends $\mathcal{\bar A}_{h,d}f(y)$.
The point $-\infty$ is inaccessible. We have $\P(\lim_{t\to\infty} Z_t=-\infty)=0$ if $d\leq 3$ while 
for every $d\geq 4$,
$\P(\lim_{t\to\infty} Z_t=-\infty)$ is equal to 0 for some $h$ and is strictly positive for some other $h$. 
\end{proposition}

\begin{proof}
The scale function $G$ and speed measure $m$ for $Z$ are given by, 
\begin{align}\label{j4.3}
G(y) = \int_{-1}^y \frac{h(u)}{|u|^{(d-1)/2}} du \quad \textrm{and}\quad  m(dy) = dh(y)|y|^{(d-1)/2} d y,
\quad \text{  for  } y\in (-\infty,0).
\end{align}
For the boundary classification we use \cite[Theorem VI.3.2]{IK81}.  Define
\[ \kappa(y) = \int_{-1}^y \left[\int_{-1}^u d|s|^{(d-1)/2}h(s) ds\right]  \frac{h(u)}{|u|^{(d-1)/2}}du.\]
Observe that $\limsup_{y\to- \infty} G(y)  < 0$. Since $\inf_{y\leq -1}h(y)>0$, it follows easily that $\lim_{y\downarrow -\infty} \kappa(y) =\infty$.  Thus $-\infty$ is inaccessible.  

Note that $\lim_{y\to-\infty} G(y)=-\infty$ if $d\leq 3$, which shows that $\P(\lim_{t\to\infty} Z_t=-\infty)=0$ in this case.  For $d\geq 4$, observe that $\lim_{y\to-\infty} G(y)>-\infty$ if $h\equiv 1$ so for this $h$ we have $\P(\lim_{t\to\infty} Z_t=-\infty)>0$.  However, if $h(y)=|y|^{\beta}$ for $\beta > (d-3)/2$, then $\lim_{y\to-\infty} G(y)=-\infty$ and, consequently,
$\P(\lim_{t\to\infty} Z_t=-\infty)=0$.
\end{proof}

The next lemma, left without proof, is an adaptation of Lemma \ref{lemma large y bound} to the present context.

\begin{lemma}\label{lemma Lp_boundd}
If $a<0$ and $1\leq p<\infty$ then family $\{ n^{1/4}N^n(y,\bu) : (y,\bu ) \in (-\infty,a] \times \bS^{d-1} \times \N\}$ is bounded in $L^p$.  
\end{lemma}

Next, we need the result corresponding to Lemma \ref{lemma large y distribution}.  Because of the new scaling we need some uniformity in the distributional convergence of Lemma \ref{lemma large y distribution} and the following lemma makes this precise.

\begin{lemma} \label{lemma intermediate moments}
If $a<0$, $1\leq p<\infty$, and $r>0$ then
\[ \lim_{n\to\infty} \sup_{(y,\bu)\in (-\infty,a]\times \bS^{d-1} } \left| \E \left( \left[n^{p/4}N^n(y,\bu)^p\right] \wedge r^p \right) - \int_0^r pt^{p-1} \exp\left(- h(y) \v t\right) \ dt \right| =0.\]
\end{lemma}

\begin{proof}
Observe that
\[\begin{split}  \E \left( \left[n^{p/4}N^n(y,\bu)^p\right] \wedge r^p \right) & =   p\int_0^\infty t^{p-1} \P\left(\left[n^{1/4}N^n(y,\bu)\right]\wedge r>t\right) dt \\
&=  p\int_0^r t^{p-1} \P\left(N^n(y,\bu)>n^{-1/4}t\right) dt
.\end{split}\]
Using \eqref{de16.5} and the change of variables $v=n^{1/4}s$ we compute
\begin{align}  \label{j4.1}
\P&\left(N^n(y,\bu)>n^{-1/4}t\right)  \\
&= \exp\left[-\int_0^t \v h\left(y+u_d\v n^{-1/2}v - \frac{g}{2n}v^2\right)
 \sqrt{ (1-u_d^2)^2+\left(u_d-\frac{g}{\sqrt{2g|y|n}}v\right)^2}dv\right].
\nonumber
\end{align} 
If $A\subset (-\infty,0)$ is compact then the function
\[\alpha(y,\bu, s) := h\left(y+u_d\v s - \frac{g}{2}s^2\right) \sqrt{\vs  (1-u_d^2)^2+\left(\v u_d-gs\right)^2}\]
is continuous and, therefore, uniformly continuous on the compact set 
$A\times \bS^{d-1}\times [0,r]$.
It follows that the functions
\[\alpha(y,\bu, s n^{-1/2}) = h\left(y+u_d\v n^{-1/2}s - \frac{g}{2n}s^2\right) \sqrt{\vs  (1-u_d^2)^2+\left(\v u_d-\frac{g}{\sqrt{n}}s\right)^2}\]
converge uniformly to $ h(y)\v$ on $A\times \bS^{d-1}\times [0,r]$ as $n\to\infty$.  Consequently $\P\left(N^n(y,\bu)>n^{-1/4}t\right)$ converges uniformly to  $\exp\left(- h(y) \v t\right)$ on this set, so that
\begin{align}\label{j4.2}
\lim_{n\to\infty} \sup_{(y,\bu)\in A \times \bS^{d-1}} \left| \E \left( \left[n^{p/4}N^n(y,\bu)^p\right] \wedge r^p \right) - \int_0^r pt^{p-1} \exp\left(- h(y) \v t\right) \ dt \right| =0.
\end{align}

Since $h$ is bounded away from  $0$ for large $|y|$, there is some $\delta>0$, independent of $n \in \N$ and $s\in [0,r]$, such that
\[\liminf_{y\to-\infty} h\left(y+u_d\v n^{-1/2}s - \frac{g}{2n}s^2\right) \sqrt{ (1-u_d^2)^2+\left(u_d-\frac{g}{\sqrt{2g|y|n}}s\right)^2} >\delta. \]
This and \eqref{j4.1} imply that
\[ \lim_{y\to -\infty} \sup_{n\in \N} 
\sup_{\bu\in  \bS^{d-1}}  \E \left( \left[n^{p/4}N^n(y,\bu)^p\right] \wedge r^p \right) =0.\]
We use the fact that $h$ is bounded away from  $0$ for large $|y|$ again to see that,
\[\lim_{y\to-\infty}  \int_0^r pt^{p-1} \exp\left(- h(y) \v t\right) \ dt =0.\]
Thus, given $\eps <0$, we can find $b<a$ such that 
\[\sup_{y\leq b} \sup_{n\in \N}   \E \left( \left[n^{p/4}N^n(y,\bu)^p\right] \wedge r^p \right) + \sup_{y\leq b} \int_0^r pt^{p-1} \exp\left(- h(y) \v t\right) \ dt  < \eps.\]
This and an application of \eqref{j4.2} with $A = [b,a]$ prove the lemma.
\end{proof}

Using this, one argues as in the proof of Lemma \ref{lemma moments monomial} to show the following result.

\begin{lemma}\label{lemma momentsd}
If $a<0$ and $1\leq p<\infty$ then
\[ \lim_{n\to\infty} \sup_{(y,\bu)\in (-\infty,a]\times\bS^{d-1}} \left| \E \left(n^{p/4}N^n(y,\bu)^p\right) - \int_0^\infty pt^{p-1} \exp\left(- h(y) \v t\right) \ dt \right| =0.\]
\end{lemma}

Using these bounds, one proves the following result using the same method as in the proof of Lemma \ref{de16.6}.

\begin{lemma}\label{lemma higher dim fluctuations}
If $A\subseteq (-\infty,0)$ is compact then
\[ \lim_{n\to\infty}\sup_{y\in A} \left| n^{3/4} \E\left[ U_dN^n(y,\bU) \right] -  \frac{g  \left( h(y)- 2|y| h'(y)  \right)}{dh(y)^3(\vs)^{3/2} }\right|=0.\]
\end{lemma}

  For $n\geq 1$, let
\begin{align*}
 K^n(y,\,\cdot\,) &= n\P\left( \alpha_n(y, \bU) \in \,\cdot\,\right),\\
 b^n(y) &= \int_\R z K^n(y,dz) =  n\E\left[ \alpha_n(y, \bU)  \right] ,\\
c^n(y) &= \int_\R z^2 K^n(y,dz) = n\E\left[ \alpha_n(y, \bU)^2  \right].
\end{align*}

\begin{proposition}\label{prop JScond}
Let $A\subset (-\infty,0)$ be compact.
\begin{enumerate}
\item $\displaystyle \lim_{n\to\infty} \sup_{y\in A} \left\|b^n(y) +\frac{1}{dh(y)^2}\left(\frac{d-1}{2|y|} + \frac{h'(y)}{h(y)}\right)\right\|=0$,
\item $\displaystyle \lim_{n\to\infty} \sup_{y\in A} \left\| c^n(y) - \frac{2}{dh(y)^2}\right\|=0$,
\item For every $\rho>0$, $\displaystyle \lim_{n\to\infty} \sup_{y\in A} \int_\R z^2\cf_{(|z|>\rho)} K^n(y, dz) =0$.
\end{enumerate}
\end{proposition}

\begin{proof}
Using Lemmas \ref{lemma momentsd} and \ref{lemma higher dim fluctuations}, we see that, uniformly on $A$,
\[\begin{split} b^n(y)  =  n\E\left[ \alpha_n(y, \bU)  \right] &= n\E\left[ U_d\v n^{-1/4}N^n(y,\bU) - \frac{g}{2\sqrt{n}}N^n(y,\bU)^2\right]\\
& = \v n^{3/4}\E\left[ U_d N^n(y,\bU)\right] -  \frac{g}{2} n^{1/2}\E\left[N^n(y,\bU)^2\right] \\
& \rightarrow  \frac{g  \left( h(y)- 2|y| h'(y)  \right)}{dh(y)^3 \vs } - \frac{g}{2} \frac{2}{h(y)^2 \vs} \\
& =-\frac{1}{dh(y)^2}\left(\frac{d-1}{2|y|} + \frac{h'(y)}{h(y)}\right)
.\end{split}\]
Similarly, Lemma \ref{lemma momentsd} implies that uniformly on $A$,
\[\begin{split}c^n(y) & = n\E\left[ \alpha_n(y, \bU)^2  \right] \\
&= \vs n^{1/2} \E\left[ U^2_d N^n(y,\bU)^2 \right] - gn^{1/4}\E\left[ U_d\v N^n(y,\bU)^3 \right] + \E\left[\frac{g^2}{2}N^n(y,\bU)^4 \right]\\
& \rightarrow  \frac{2}{d h(y)^2}
.\end{split} \]
The third assertion of the proposition is a consequence of Lemma \ref{lemma Lp_boundd} and Markov's inequality.
\end{proof}

\begin{proof}[Proof of Theorem \ref{theorem skeletal convergence d}]
Proposition \ref{prop JScond} shows that the hypotheses of \cite[Theorem IX.4.21]{JS03} are satisfied for a version of the chain with appropriate cutoffs at $u,v$ with $u<y<v<0$.  As shown in Proposition \ref{proposition d dimensional limit process}, $-\infty$ is inaccessible for the limiting diffusion, which allows the cutoff at $u$ to be removed.
\end{proof}

In order to go back to the true process from the process observed at reflection times we need the following result about the limiting time change.

\begin{proposition}\label{j4.4}
Suppose that $(Z_t,t\geq 0)$ is a Feller diffusion on $(-\infty, 0)$ whose generator extends $\mathcal{\bar A}_{h,d}f(y)$ defined in \eqref{ja24.1}.
For all $d$ and for all $h$ satisfying our hypotheses we have that for all $y<v<0$,
\[ \P_y\left(\lim_{t\to\infty}   \int_0^t \frac{1}{\sqrt{2g|Z(s \wedge \tau_{v+})|} \, h(Z(s \wedge \tau_{v+}))} ds = \infty\right) =1.\]
\end{proposition}

\begin{proof}
Recall the scale function and speed measure given in  \eqref{j4.3}.
The result is trivial on the set where $(Z(t \wedge \tau_{v+}), t\geq 0)$ is absorbed at $v$.  If $G(-\infty)=-\infty$, then  $(Z(t \wedge \tau_{v+}), t\geq 0)$ is absorbed at $v$ with probability $1$, so we may assume that $G(-\infty)$ is finite.  Note that this implies that $d>3$.  In this case there exists $C>0$ such that for all $y<-1$
\[ (G(y) - G(-\infty)) \left(\frac{1}{\sqrt{2g|y|}\,h(y)}\right) \frac{dm}{dy}(y) \geq C\sqrt{|y|}.\]
Since $\int_{-\infty}^y \sqrt{|u|}du = \infty$ for all $y\in \R$, the result is an  application of \cite[Theorem 2.11]{MU12}. 
\end{proof}

\begin{proof}[Proof of Theorem \ref{THM GENERAL DENSITY}]
Combining Theorem \ref{theorem skeletal convergence d} with Proposition 
\ref{j4.4}, the proof consists of making straightforward modifications to the time change argument in the proof of Theorem \ref{theorem monomial path convergence}.
\end{proof}

\appendix

\section{Reflection direction}\label{refdir}

This short section presents an elementary fact about the classical (specular) reflection. The claim is known in dimension $d=3$ (see, for example, the discussion of  the so-called hard-sphere scattering in \cite[Sect.~4.8]{Mechanics}) but we could not find a reference for the analogous result in all dimensions $d\geq 2$. 

Suppose that $d\geq 2$.
Let $\bS^{d-1}$ be the unit sphere in $\R^d$ and let $\be_1,\dots,\be_d$ be the standard basis for $\R^d$. Let $\bB^{d-1} = \{(0,x_2,\dots, x_d) \in \R^d: x_2^2 + \dots + x_d^2  \leq 1\}$. Let $\bb$ be a random vector with the uniform distribution in $\bB^{d-1}$ and let $\calL$ be the random straight line $\{\bb + a \be_1, a\in \R\}$. Suppose that a light ray starts from the point $\bb +2 \be_1$ and travels along $\calL$ in the direction of the point $\bb - 2 \be_1$.
Now suppose that this random light ray reflects from $\bS^{d-1}$ according to the classical law of specular reflection, i.e., the angle of reflection is equal to the angle of incidence. Let $\bv\in \bS^{d-1}$ be the vector representing the direction of the reflected ray, i.e., the reflected light ray travels along a straight line of the form $\{\bw + a \bv, a\in \R\}$ for some vector $\bw \in \R^d$.

\begin{proposition}
The distribution of $\bv$ is uniform on $\bS^{d-1}$ if and only if $d=3$.
\end{proposition}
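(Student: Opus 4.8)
The plan is to compute the reflected direction $\bv$ explicitly as a function of the random impact point $\bb$, exploit rotational symmetry to reduce the question to a one-dimensional comparison, and then match the resulting density against the known law of a single coordinate of a uniform point on $\bS^{d-1}$.

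First I would set $\rho = |\bb| \in [0,1]$ and write $\bb = (0,\bb')$ with $\bb'$ the component in the hyperplane $x_1 = 0$, so $|\bb'| = \rho$. Traveling in the direction $\mathbf{d} = -\be_1$, the ray $\calL$ meets $\bS^{d-1}$ first at the point $\mathbf{p} = \bb + \sqrt{1-\rho^2}\,\be_1$, whose outward unit normal is $\mathbf{p}$ itself. Applying the specular reflection formula $\bv = \mathbf{d} - 2(\mathbf{d}\cdot\mathbf{p})\,\mathbf{p}$ and using $\mathbf{d}\cdot\mathbf{p} = -\sqrt{1-\rho^2}$ gives, after a short computation,
\[
\bv = (1-2\rho^2)\,\be_1 + 2\sqrt{1-\rho^2}\,\bb',
\]
and one checks directly that $|\bv| = 1$ and that $\bv\cdot\mathbf{p} > 0$, so the reflected ray indeed leaves the sphere.

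The key structural observation is that $\bb'$ is rotationally symmetric about the $\be_1$-axis, and $\bv$ depends on $\bb'$ only through $\rho$ and the unit direction $\bb'/\rho$; hence $\bv$ is itself rotationally symmetric about $\be_1$. Consequently $\bv$ is uniform on $\bS^{d-1}$ if and only if its first coordinate $v_1 = 1-2\rho^2$ has the same distribution as the first coordinate of a uniformly distributed point on $\bS^{d-1}$. Since $\bb$ is uniform in $\bB^{d-1}\subset\R^{d-1}$, the radius $\rho$ has density proportional to $\rho^{d-2}$ on $[0,1]$. The change of variables $v = 1-2\rho^2$, with $\rho = \sqrt{(1-v)/2}$ and $|d\rho/dv| = 1/(4\rho)$, then produces a density for $v_1$ proportional to $\rho^{d-3} \propto (1-v)^{(d-3)/2}$ on $[-1,1]$.

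Finally I would recall the standard fact that the first coordinate of a point uniform on $\bS^{d-1}$ has density proportional to $(1-v^2)^{(d-3)/2} = (1-v)^{(d-3)/2}(1+v)^{(d-3)/2}$ on $[-1,1]$. Matching the two densities forces $(1+v)^{(d-3)/2}$ to be constant in $v$, which holds exactly when $d=3$; in that case both laws reduce to the uniform law on $[-1,1]$, in accordance with Archimedes' theorem, while for every other $d\geq 2$ the factor $(1+v)^{(d-3)/2}$ is non-constant and the laws differ. I do not expect a serious obstacle, as the whole argument is elementary; the step deserving the most care is the reduction via rotational symmetry combined with correctly recording the exponent $(d-3)/2$ in both densities, since it is precisely the presence of the extra $(1+v)$ factor on the sphere side that pins the answer to $d=3$.
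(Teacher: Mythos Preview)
Your proof is correct and follows essentially the same strategy as the paper: exploit rotational symmetry about the $\be_1$-axis to reduce the question to comparing a one-dimensional law, then check when the two laws coincide. The difference is in parametrization and endgame. The paper works with the scattering angle $\Theta$ (noting $|\bb|=\sin(\Theta/2)$), compares the CDF $\P(\Theta\le\beta)=(\sin(\beta/2))^{d-1}$ against the spherical-cap measure $\mu(A_\beta)=c_d\int_0^\beta \sin^{d-2}\gamma\,d\gamma$, verifies equality by hand for $d=3$, and for $d\neq 3$ rules out equality by evaluating the ratio of derivatives at two specific angles. You instead work with the first coordinate $v_1=1-2\rho^2=\cos\Theta$ and compare densities directly, obtaining $(1-v)^{(d-3)/2}$ versus $(1-v)^{(d-3)/2}(1+v)^{(d-3)/2}$; the obstruction is then visibly the single factor $(1+v)^{(d-3)/2}$, which is constant precisely when $d=3$. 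Your route avoids quoting the spherical-cap area formula and makes the reason for the special role of $d=3$ completely transparent, at the cost of invoking the standard marginal density for a coordinate of a uniform point on $\bS^{d-1}$.
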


\begin{proof}
Let $\bn$ be the outer normal vector to the sphere $\bS^{d-1}$ at the point where the light ray hits the sphere. If $|\bb| = r_1$ and the angle between $\be_1 $ and $\bn$ is $\alpha_1$ then $r_1 = \sin \alpha_1$. 
Let $\Theta$ be the angle between
$\bv$ and $\be_1$. The specular law of reflection implies that
the angle between
$\bv$ and $\bn$ is $\alpha_1$ so $\Theta = 2 \alpha_1$. Hence, for a given $r\in(0,1)$, we have $|\bb| \leq r$ if and only if  $\Theta \leq 2 \alpha$, where $r = \sin \alpha$. Let $\beta = 2\alpha$ so that $r = \sin (\beta/2)$.
We obtain
\begin{align*}%\label{de13.1}
\P(\Theta \leq \beta ) =
\P(|\bb| \leq r) = r^{d-1} = (\sin \alpha)^{d-1} =
(\sin (\beta/2))^{d-1}.
\end{align*}

Let $A_\beta$ be the spherical cap with the angle $\beta$, i.e., the set of points $x \in \bS^{d-1}$ such that the angle between the vector $\overrightarrow {0x}$ and $\be_1$ is smaller than or equal to $\beta$. Let $\mu$ be the uniform probability measure on $\bS^{d-1}$. It suffices to show that $\mu(A_\beta) = \P(\Theta \leq \beta )$ for all $\beta\in(0,\pi)$ if and only if $d=3$.

The following formulas for the area of $A_\beta$ and $\bS^{d-1}$ are taken from \cite{caparea}.
The area of $A_\beta$ is equal to $(2 \pi^{(d-1)/2} / \Gamma( (d-1)/2) ) \int_0^\beta \sin^{d-2} \gamma d \gamma$. The area of $\bS^{d-1}$ is $2\pi^{d/2}/ \Gamma(d/2)$. It follows that
\begin{align*}
\mu(A_\beta) = \frac{\Gamma(d/2)}{\sqrt{\pi}\Gamma( (d-1)/2) } \int_0^\beta \sin^{d-2} \gamma d \gamma.
\end{align*}
For $d=3$ and all $\beta\in(0,\pi)$,
\begin{align*}
\P(\Theta \leq \beta ) =
(\sin (\beta/2))^2
= \frac 12 (1-\cos \beta) 
= \frac{\Gamma(3/2)}{\sqrt{\pi}\Gamma( 1) } \int_0^\beta \sin \gamma d \gamma
= \mu(A_\beta),
\end{align*}
so the proposition is proved for $d=3$.

For all $d\geq 2$ and $\beta\in(0,\pi)$,
\begin{align*}
f(\beta)& := \frac {\prt}{\prt \beta} \P(\Theta \leq \beta )
= \frac {\prt}{\prt \beta} (\sin (\beta/2))^{d-1}
= \frac{d-1} 2 (\sin (\beta/2))^{d-2} \cos (\beta/2), \\
g(\beta)& := \frac {\prt}{\prt \beta}\mu(A_\beta)
= \frac{\Gamma(d/2)}{\sqrt{\pi}\Gamma( (d-1)/2) } \sin^{d-2}  \beta.
\end{align*}
This implies that
\begin{align*}
\frac{f(\pi/2)}{g(\pi/2)} \frac{g(\pi/4)}{f(\pi/4)} 
= 2^{(3/2)-d} \sec(\pi/8) (\sin(\pi/8))^{2-d}
= (2 \sin(\pi/8))^{3-d}.
\end{align*}
The last quantity is not equal to 1 for $d\ne 3$ so the functions $f$ and $g$ are not identically equal to each other. Hence,  for $d\ne 3$, it is not true that $\P(\Theta \leq \beta ) \equiv \mu(A_\beta)$.
\end{proof}

Since $d=3$ is the dimension of our physical space, this justifies the choice of the uniform direction of reflection in this paper. In other dimensions, we also assume that the direction of reflection is uniform, for several reasons. The first is mathematical convenience. Second, the assumption of the uniform angle of reflection allows us to use a Markov model for the process of locations of consecutive scattering events. Finally, we believe that due to mixing (in the probabilistic sense of the word),  our results would remain unchanged, in the qualitative sense, if we incorporated the true distribution of reflection in dimensions $d\ne 3$.
\end{proof}

\section*{Acknowledgments}

We are grateful to Zhenqing Chen, Tadeusz Kulczycki, Soumik Pal and Brent Werness for very helpful advice.

\bibliographystyle{elsart-num-sort}
\bibliography{gravity_bib1}

\end{document}